\documentclass[11pt]{article}

\usepackage{amsmath,amsfonts,amssymb,amsthm,mathtools}
\usepackage[top=2.2cm,bottom=2.2cm,left=2.6cm,right=2.6cm]{geometry}
\usepackage{xcolor}
\usepackage{hyperref}
\hypersetup{
  colorlinks=true,
  citecolor=blue,
  linkcolor=blue,
  urlcolor=blue
}

\newtheorem{theorem}{Theorem}[section]
\newtheorem{lemma}[theorem]{Lemma}

\newtheorem{corollary}[theorem]{Corollary}

\theoremstyle{definition}

\theoremstyle{remark}

\newcommand{\Hh}{\mathcal H}
\newcommand{\Ff}{\mathcal F}
\newcommand{\Ll}{\mathcal L}
\newcommand{\Ss}{\mathcal S}
\newcommand{\Tt}{\mathcal T}
\newcommand{\Dd}{\mathcal D}
\newcommand{\supp}{\operatorname{supp}}
\newcommand{\spex}{\operatorname{spex}}

\renewenvironment{abstract}
  {\par\noindent\textbf{\abstractname.}\ \ignorespaces}
  {\par\medskip}
\providecommand{\keywords}[1]{\small\textbf{\textit{Keywords.}} #1}
\providecommand{\ams}[1]{\small\textbf{\textit{AMS subject classifications.}} #1}

\title{\bfseries\Large Spectral extremal hypergraphs without long Berge cycles}

\author{
Lihua Feng$^a$,\ \  Lu Lu$^a$\footnote{Corresponding author. \newline{\hspace*{5mm} Email addresses}:
\url{fenglh@163.com} (L. Feng), \url{lulumath@csu.edu.cn} (L. Lu), \url{mathtzwu@163.com} (T. Wu).}, \ \ Tingzeng Wu$^b$\\[2mm]
\small $^a$School of Mathematics and Statistics, HNP-LAMA, Central South University,\\
\small $^a$Changsha 410083, China\\
\small $^b$School of Mathematics and Statistics, Qinghai Minzu University,\\
\small $^b$Xining, Qinghai, 810007, China\\}

\date{}

\begin{document}
\maketitle

\begin{abstract}
Let $r\ge 3$ and $k\ge 2r+1$ be fixed integers. We determine, for all sufficiently
large $n$, the maximum adjacency-tensor spectral radius of an $n$-vertex
$r$-uniform hypergraph containing no Berge cycle of length at least $k$.
Write $s=\left\lfloor\frac{k-1}{2}\right\rfloor$. If $k=2s+1$ is odd, the unique extremal hypergraph consists of all
$r$-sets containing at most one vertex outside a fixed $s$-set. If
$k=2s+2$ is even, one additionally includes all $r$-sets containing a
fixed pair outside the $s$-set together with $r-2$ vertices inside it.
Consequently, the maximum spectral radius is given as
\[
\spex_r(n,k)
=
\left[
\binom{s}{r-1}
\left(\frac{r-1}{s}\right)^{(r-1)/r}
+o(1)
\right]n^{(r-1)/r}.
\]
\end{abstract}

\noindent\keywords{Uniform hypergraph; Berge cycle; Spectral extremal problem; Adjacency tensor}

\smallskip
\noindent\ams{05C50, 05D05, 05C65}

\section{Introduction}

An \emph{$r$-uniform hypergraph}, or simply an \emph{$r$-graph}, is a family of
$r$-element subsets of a finite vertex set. For an $r$-graph $\Hh$, we
write $V(\Hh)$ and $E(\Hh)$ for its vertex and edge sets, respectively,
and put $e(\Hh)=|E(\Hh)|$. A \emph{Berge cycle of length $\ell$} consists
of distinct vertices $v_1,\ldots,v_\ell$ and distinct edges
$e_1,\ldots,e_\ell$ such that
$\{v_i,v_{i+1}\}\subseteq e_i$ for $1\le i\le \ell$, where indices are taken modulo $\ell$. Thus a Berge cycle is obtained
from an ordinary cycle by enlarging its edges to distinct hyperedges.
We say that $\Hh$ is \emph{Berge-$C_{\ge k}$-free} if it contains no
Berge cycle of length at least $k$.

The classical theorem of Erd\H{o}s and Gallai \cite{ErdosGallai1959}
states that an $n$-vertex graph with no cycle of length at least $k$ has
at most $(k-1)(n-1)/2$ edges. Hypergraph analogues were first developed
for Berge paths, notably by Gy\H{o}ri, Katona and Lemons
\cite{GyoriKatonaLemons2016}. For Berge cycles, F\"uredi, Kostochka and
Luo \cite{FKL1} proved that if $r\ge 3$, $k\ge r+3$, and $\Hh$ is an
$n$-vertex Berge-$C_{\ge k}$-free $r$-graph, then
\begin{equation}\label{eq:1}
 e(\Hh)\le \frac{n-1}{k-2}\binom{k-1}{r}.
\end{equation}
They also characterized equality when $(k-2)\mid(n-1)$. Their sequel
\cite{FKL2} determines the exact edge extremal function for every $n$
when $k\ge r+4$ and describes all extremal configurations. The cases
$k=r+1$ and $k=r+2$ were settled by Ergemlidze, Gy\H{o}ri, Methuku,
Salia, Tompkins and Zamora \cite{ErgemlidzeEtAl2020}.

Tensor methods provide a natural spectral theory for uniform
hypergraphs. Eigenvalues of symmetric tensors were introduced by Qi
\cite{Qi2005} and Lim \cite{Lim2005}, and Perron--Frobenius theory for
nonnegative tensors was developed by Chang, Pearson and Zhang
\cite{ChangPearsonZhang2008}. Cooper and Dutle \cite{CooperDutle2012}
associated an adjacency tensor with a uniform hypergraph and developed
many analogues of basic results from spectral graph theory. For a
nonnegative vector $x=(x_v)_{v\in V(\Hh)}$, write
\[
P_{\Hh}(x)
:=
r\sum_{e\in E(\Hh)}\prod_{v\in e}x_v.
\]
The adjacency-tensor spectral radius of $\Hh$ is
\begin{equation}\label{eq:2}
\rho(\Hh)
:=
\max\left\{
P_{\Hh}(x):x_v\ge 0,\ \sum_{v\in V(\Hh)}x_v^r=1
\right\}.
\end{equation}

Spectral extremal problems for uniform hypergraphs have been studied
from several directions. Keevash, Lenz and Mubayi
\cite{KeevashLenzMubayi2014} established general principles that
transfer strong stability results from ordinary Tur\'an problems to
$\alpha$-spectral extremal problems. Nikiforov \cite{Nikiforov2014}
developed a broad analytic theory of the $p$-spectral radius, while
Kang, Nikiforov and Yuan \cite{KangNikiforovYuan2015} solved several
$p$-spectral extremal problems for multipartite and chromatic uniform
hypergraphs. The weighted-incidence method of Lu and Man
\cite{LuMan2016}, and its $p$-spectral extension by Liu and Lu
\cite{LiuLu2022}, supplies another useful comparison tool. In the sparse
regime, Bai and Lu \cite{BaiLu2018} proved a hypergraph analogue of
Stanley's fixed-edge bound: if $e(\Hh)=\binom{x}{r}$ for a real
$x\ge r-1$, then
\[
\rho(\Hh)\le \binom{x-1}{r-1},
\]
where equality holds precisely for a complete $r$-graph together with isolated
vertices when $x$ is integral. Recent stability developments include
results for cancellative hypergraphs \cite{NiLiuKang2024} and for broad
classes of nondegenerate Tur\'an problems \cite{ZhengLiFan2026}.

There is also a growing literature on spectral problems with Berge-type
forbidden configurations. Zhou, Kang, Liu and Shan
\cite{ZhouKangLiuShan2020} and Kang, Liu, Lu and Wang
\cite{KangLiuLuWang2021} studied $p$-spectral extremal problems for
Berge hypergraphs. Fang, Chang, Xu, Gao and Hou
\cite{FangChangXuGaoHou2025} recently determined the exact spectral
extremizer for sufficiently large Berge-path-free $3$-graphs in a broad
range. For linear uniform hypergraphs, She, Fan and Kang
\cite{SheFanKang2025} obtained spectral Tur\'an bounds for several
bipartite Berge patterns. Wang, Huang and Shi
\cite{WangHuangShi2026} considered linear hypergraphs avoiding
expansions of paths and gave a spectral bound for connected linear
$3$-graphs without a prescribed Berge path. At the Hamiltonian end,
Brooks, Linz and Luo \cite{BrooksLinzLuo2026} obtained an
adjacency-spectral condition forcing a Hamiltonian Berge cycle, apart
from an explicit extremal exception.

In this paper we maximize the spectral radius over the entire family of
$r$-graphs with no Berge cycle of length at least $k$. For fixed $r$ and
$k$, define
\[
\spex_r(n,k)
:=
\max\left\{
\rho(\Hh):|V(\Hh)|=n,\ \Hh\text{ is an $r$-graph and is Berge-$C_{\ge k}$-free}
\right\}.
\]
Throughout the paper, we put
\[
q:=r-1,
\qquad
s:=\left\lfloor\frac{k-1}{2}\right\rfloor,
\qquad
L_{r,s}:=\binom{s}{q}\left(\frac{q}{s}\right)^{q/r}.
\]
The assumption $k\ge 2r+1$ is equivalent to $s\ge r$.

We next describe the two extremal constructions. Let $S$ be an
$s$-element set, put $Y=V\setminus S$, and define
\begin{equation}\label{eq:3}
E(\Ss_{n,s}^{(r)})
=
\binom{S}{r}
\cup
\left\{
\{y\}\cup F:y\in Y,\ F\in\binom{S}{q}
\right\}.
\end{equation}
Thus $\Ss_{n,s}^{(r)}$ consists of all $r$-sets containing at most one
vertex outside $S$. For the even construction, assume \(n\ge s+2\), choose distinct
vertices \(u,v\in Y\), and put
\begin{equation}\label{eq:4}
E(\Ss_{n,s}^{(r),+})
=
E(\Ss_{n,s}^{(r)})
\cup
\left\{
\{u,v\}\cup R:R\in\binom{S}{r-2}
\right\}.
\end{equation}
The isomorphism type is independent of the chosen pair $\{u,v\}$.

Our main result determines the exact extremal hypergraphs for all
sufficiently large $n$.

\begin{theorem}\label{thm:main}
Fix $r\ge 3$ and $k\ge 2r+1$, and let
$s=\lfloor(k-1)/2\rfloor$. There exists $n_0=n_0(r,k)$ such that, for
all $n\ge n_0$, the following statements hold.
\begin{itemize}
\item[(i)] If $k=2s+1$, then every $n$-vertex Berge-$C_{\ge k}$-free
$r$-graph $\Hh$ satisfies
\[
\rho(\Hh)\le \rho(\Ss_{n,s}^{(r)}),
\]
with equality if and only if $\Hh\cong\Ss_{n,s}^{(r)}$.
\item[(ii)] If $k=2s+2$, then every $n$-vertex Berge-$C_{\ge k}$-free
$r$-graph $\Hh$ satisfies
\[
\rho(\Hh)\le \rho(\Ss_{n,s}^{(r),+}),
\]
with equality if and only if $\Hh\cong\Ss_{n,s}^{(r),+}$.
\end{itemize}
\end{theorem}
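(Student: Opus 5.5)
We follow the standard spectral-stability scheme. Throughout, $\Hh$ is an extremal $n$-vertex Berge-$C_{\ge k}$-free $r$-graph, i.e.\ $\rho(\Hh)=\spex_r(n,k)$, and $x$ is a nonnegative maximizer in \eqref{eq:2}. By Perron--Frobenius theory we may restrict to a connected component carrying the mass, so $x$ satisfies the eigen-equations $\rho x_v^{q}=\sum_{e\ni v}\prod_{w\in e\setminus\{v\}}x_w$. Each step below is aimed at confining all but $O(1)$ of the edges into the shape of $\Ss_{n,s}^{(r)}$.

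\textbf{Step 1 (lower bound).} Test $\Ss_{n,s}^{(r)}$ with the vector $x_v=a$ on $S$ and $x_v=b$ on $Y$, and optimize under $sa^r+(n-s)b^r=1$. The dominant term is $r(n-s)\binom{s}{q}a^qb$. Optimizing gives
\[
\rho(\Hh)\ge \rho(\Ss_{n,s}^{(r)})\ge (L_{r,s}-o(1))\,n^{q/r},
\]
and at the optimum $a^r\approx q/(rs)$ while $b^r\approx 1/(rn)$.

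\textbf{Step 2 (edge bound and a high-degree set).} By \eqref{eq:1}, $e(\Hh)=O(n)$. A Bai--Lu type estimate then gives $\rho(\Hh)\le C n^{q/r}$. Use the eigen-equation together with $\sum_e \prod_{v\in e}x_v=\rho/r$. Since almost all of $\rho$ must come from edges containing vertices of large weight, we define $L=\{v: x_v\ge \varepsilon\}$, which has $|L|=O(1)$. We then show that the weight $\sum_{v\notin L}x_v^r$ is close to $1/r$ and spread over $\Theta(n)$ vertices of weight $\Theta(n^{-1/r})$.

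\textbf{Step 3 (structure via Berge-cycle freeness).} This is the main step. Let $L'\subseteq L$ be the set of vertices $v$ with $d(v)\ge \delta n$. A Berge-$C_{\ge k}$-free graph cannot contain $s+1$ vertices that all have many common ``private'' $q$-sets of $L'$-neighbours: using the $\Theta(n)$ low vertices as connectors, one would build a Berge cycle of length $\ge 2(s+1)\ge k$ alternating between $L'$ and $Y$. Hence $|L'|\le s$. Conversely, if $|L'|<s$, then $\rho$ is at most roughly $\binom{|L'|}{q}(q/|L'|)^{q/r}n^{q/r}$, plus lower-order contributions from bounded-degree vertices. Since $\binom{t}{q}(q/t)^{q/r}$ is increasing in $t$, this contradicts Step 1. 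So $|L'|=s$, and $L'$ plays the role of $S$.

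Next we show that every vertex of $Y=V\setminus L'$ has $x_y=O(n^{-1/r})$, and that almost every $y$ lies in edges of the form $\{y\}\cup F$ with $F\in\binom{L'}{q}$. The key remaining claim is that \emph{every} edge meets $Y$ in at most one vertex (odd case). If instead two vertices $u,v\in Y$ lie in a common edge, then, because the $s$ vertices of $L'$ are pairwise connected through many disjoint paths via $Y$, we can extend to a Berge cycle of length $2s+1=k$. In the even case a single such pair is permitted. Two disjoint ``outside'' pairs, or configurations involving three $Y$-vertices, create a cycle of length $2s+2$. So the outside part consists of edges $\{u,v\}\cup R$ with $R\subseteq L'$, all sharing the same pair $\{u,v\}$.

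\textbf{Step 4 (exactness).} Once $\Hh$ is a subgraph of $\Ss_{n,s}^{(r)}$ (respectively of some $\Ss_{n,s}^{(r),+}$), strict monotonicity of $\rho$ under adding edges for connected hypergraphs gives the inequality. Because $\Hh$ is extremal, it must equal the full construction, and both constructions are indeed Berge-$C_{\ge k}$-free, via a longest-cycle argument that alternates through $S$.

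The main obstacle is Step 3: converting the spectral information into exact edge structure. In particular, excluding sporadic edges with two or more vertices outside $L'$, or with multiple outside vertices in the even case, requires a careful path-linking argument through $L'$ using the abundance of low vertices. Where the path argument falls short, we will instead compare eigenvector entries: deleting a bad edge and adding a missing $\{y\}\cup F$ edge increases $P_\Hh(x)$, since $a^qb\gg b^2a^{r-2}$. This edge-switching argument should handle the leftover cases.
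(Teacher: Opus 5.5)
Your outline has the same architecture as the paper (lower bound, asymptotic upper bound, Perron-vector stability, exact comparison). However, the two technical steps that carry the argument are missing, and several claims in Step~3 are false as stated.

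First, nothing controls the weight of edges containing at least two low-weight vertices. Outside $L'$ you only know $x_v<\varepsilon$ and $d(v)<\delta n$, which is not bounded degree. Moreover, $e(\Hh)=O(n)$ alone does not give $\sum_{|e\setminus L'|\ge 2}x_e=o(n^{q/r})$, since edges may concentrate on particular low vertices. The paper's device is Lemma~\ref{lem:orientation}: the hereditary bound $e(\Hh[U])\le\kappa|U|$ together with Hall's theorem gives $\phi:E(\Hh)\to V(\Hh)$ with $\phi(e)\in e$ and $|\phi^{-1}(v)|\le\kappa$. Hence $\sum_{|e\cap Y|\ge 2}x_e\le\kappa\,x_{M+1}\sum_{y\in Y}x_y+O_M(1)$.

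Second, $|L'|\le s$ does not follow from Berge-cycle-freeness. Two copies of $\Ss_{\lfloor n/2\rfloor,s}^{(r)}$ sharing one outside vertex form a connected Berge-$C_{\ge k}$-free $r$-graph in which all $2s$ core vertices have linear degree and Perron weight bounded below. The conclusion $x_{s+1}\to 0$ has to come out of the spectral optimization. Its combinatorial input is Corollary~\ref{cor:link-type} (via Lemma~\ref{lem:clone} and Dirac's theorem): fewer than $|\supp(\Ll)|$ outside vertices can share a given link $\Ll$ of support at least $s+1$. Consequently only $O_M(1)$ outside vertices have links meeting more than $s$ of the top $M$ vertices. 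Your ``many common private $q$-sets'' statement is not this. You also never prove that almost every outside vertex has the \emph{complete} link $\binom{L'}{q}$; the paper derives this from near-equality of the core coordinates and the link-deficit estimate.

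Third, the exact structure cannot be read off from cycle-freeness. A pendant edge $\{g,u_1,\ldots,u_{r-1}\}$ attached to an outside vertex $g$ of $\Ss_{n,s}^{(r)}$ lies on no Berge cycle. So ``every edge meets $Y$ in at most one vertex'' is available only after every outside vertex is shown to have the full link; your linking argument needs both $u$ and $v$ to be such good vertices. In the even case, $\Ss_{n,s}^{(r)}\cup\{e\}$ with $|e\setminus S|=3$ is Berge-$C_{\ge 2s+2}$-free, because a single edge can represent only one outside pair. Thus a lone edge with three outside vertices is not excluded by any cycle argument. It also lies in no copy of $\Ss_{n,s}^{(r),+}$, so your Step~4 never reaches it; the paper disposes of it via $x_e\le x_Q\eta_n^{t-2}<x_Q\,e_{r-2}(x_i:i\in S)$.

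Your fallback of local edge-switching is not enough. A switched hypergraph need not be Berge-$C_{\ge k}$-free, and a one-for-one swap does not bound how many extra edges a bad vertex carries. The paper instead compares $P_{\Tt^+}(x)$ with $P_{\Hh}(x)$ for the full candidate $\Tt^+$, which is admissible by Lemma~\ref{lem:candidates-legal}. It charges each extra edge meeting a bad vertex to a bad vertex, with load at most $K=\kappa(s+d+1)$. Hall's condition holds because each bad vertex has at most $d\in\{1,r-1\}$ good neighbours through such edges (Lemma~\ref{lem:cycle-forcing}(ii)). Each bad vertex $b$ misses a base edge of weight at least $c_n x_b$ and carries extra weight at most $K\eta_n x_b$. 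The gain is therefore at least $r(c_n-K\eta_n)\sum_{b\in B}x_b>0$ unless $B=\varnothing$. Only after that do Lemma~\ref{lem:cycle-forcing}(i),(iii) and the single-edge comparison pin down the extremal structure.
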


\begin{corollary}\label{cor:asymptotic-intro}
Under the assumptions of Theorem~\ref{thm:main}, we have
\[
\spex_r(n,k)
=
\left(L_{r,s}+o(1)\right)n^{q/r}.
\]
\end{corollary}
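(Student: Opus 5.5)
By Theorem~\ref{thm:main}, $\spex_r(n,k)$ is exactly $\rho(\Ss_{n,s}^{(r)})$ when $k$ is odd and $\rho(\Ss_{n,s}^{(r),+})$ when $k$ is even. The corollary therefore reduces to computing these two spectral radii asymptotically. The plan is to prove a lower bound $\rho(\Ss_{n,s}^{(r)})\ge (L_{r,s}-o(1))n^{q/r}$ by choosing a test vector in \eqref{eq:2}, and a matching upper bound $\rho(\Ss_{n,s}^{(r),+})\le (L_{r,s}+o(1))n^{q/r}$. Since $\Ss_{n,s}^{(r)}\subseteq \Ss_{n,s}^{(r),+}$ and $\rho$ is monotone under adding edges (clear from the variational formula \eqref{eq:2}), these two bounds cover both parities at once.

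\textbf{Lower bound.} Put $x_v=a$ for $v\in S$ and $x_y=b$ for $y\in Y$, with $sa^r+(n-s)b^r=1$. Discarding the edges inside $S$ gives
\[
P(x)\ \ge\ r(n-s)\binom{s}{q}a^{q}b .
\]
Set $t=sa^r\in(0,1)$, so that $(n-s)b^r=1-t$. Then the right-hand side equals
\[
r\binom{s}{q}(n-s)^{q/r}\, s^{-q/r}\, t^{q/r}(1-t)^{1/r}.
\]
The factor $t^{q/r}(1-t)^{1/r}$ is maximized at $t=q/r$, where its value is $(q/r)^{q/r}(1/r)^{1/r}=q^{q/r}/r$. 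Substituting gives $\binom{s}{q}(q/s)^{q/r}(n-s)^{q/r}=(L_{r,s}-o(1))n^{q/r}$.

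\textbf{Upper bound.} Take any nonnegative $x$ with $\sum x_v^r=1$, and write $A=\sum_{v\in S}x_v^r$ and $B=\sum_{y\in Y}x_y^r=1-A$. The edges of $\Ss_{n,s}^{(r),+}$ fall into three groups, which I bound separately.

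The edges inside $S$ contribute at most $\rho(K_s^{(r)})=\binom{s-1}{r-1}=O(1)$.

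The edges $\{y\}\cup F$ contribute $r\,e_q(x_S)\sum_{y}x_y$, where $e_q$ denotes the $q$-th elementary symmetric polynomial. By Maclaurin's inequality and the power-mean inequality, $e_q(x_S)\le \binom{s}{q}(A/s)^{q/r}$. By H\"older, $\sum_y x_y\le n^{q/r}B^{1/r}$. The same one-variable optimization as in the lower bound then gives at most $\binom{s}{q}(q/s)^{q/r}n^{q/r}=L_{r,s}n^{q/r}$.

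The extra edges $\{u,v\}\cup R$ contribute at most $r\binom{s}{r-2}=O(1)$, since every coordinate is at most $1$.

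Summing the three groups yields $\rho\le L_{r,s}n^{q/r}+O(1)$.

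Both steps are routine. The only point needing care is the middle term of the upper bound, where the Maclaurin and H\"older estimates must combine to give exactly the constant $L_{r,s}$. The total is then $(L_{r,s}+o(1))n^{q/r}$, because $q/r>0$ makes the $O(1)$ terms negligible.
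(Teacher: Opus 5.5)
Your proof is correct, but it takes a different route from the paper. The paper gets the corollary at once from Theorem~\ref{thm:asymptotic}, which is exactly this asymptotic statement, proved directly for all Berge-$C_{\ge k}$-free $r$-graphs. Its lower bound comes from Lemmas~\ref{lem:candidates-legal} and \ref{lem:candidate-lower-bound}. Its upper bound comes from splitting edges by how many small coordinates they contain and using the bounded-load edge assignment. You instead use Theorem~\ref{thm:main} to identify $\spex_r(n,k)$ with the spectral radius of an explicit construction, and then estimate that radius by hand. Your lower bound is precisely Lemma~\ref{lem:candidate-lower-bound}. Your upper bound for $\Ss_{n,s}^{(r),+}$ is an elementary three-group estimate, and it is sound. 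Applying Maclaurin directly to $x_S$ and then the power-mean inequality gives the inequality of Lemma~\ref{lem:elementary-symmetric} one step faster than the paper's detour through H\"older.

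Logically your route is more expensive. The proof of Theorem~\ref{thm:main} itself rests on Theorem~\ref{thm:asymptotic} and the stability theorem, and your conclusion holds only for $n\ge n_0$. In exchange, you get a sharper error term. Once the extremal hypergraph is known, the only losses in your upper bound are the bounded contributions of edges inside $S$ and of the special pair family. Hence $\spex_r(n,k)=L_{r,s}n^{q/r}+O(1)$ for $n\ge n_0$. The general argument of Theorem~\ref{thm:asymptotic} only bounds the error by $r\kappa M^{-1/r}n^{q/r}+O_M(1)$, i.e.\ $o(n^{q/r})$.
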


The spectral extremal configuration is genuinely different from the
edge-extremal one. When \((k-2)\mid(n-1)\), equality in
\eqref{eq:1} is attained by a connected block structure whose blocks
are copies of \(K_{k-1}^{(r)}\) \cite{FKL1}; the exact edge extremizers
for arbitrary \(n\) retain this block-based character
\cite{FKL2}. By contrast, the spectral extremizer concentrates its
large Perron coordinates on a fixed set \(S\) of \(s\) vertices and
attaches linearly many low-weight vertices through the complete
\((r-1)\)-link over \(S\). When \(k\) is even, one additional complete
family supported on a single pair outside \(S\) is permitted.

For comparison, Gao and Hou \cite{GaoHou} solved the corresponding
adjacency-spectral problem for ordinary graphs. For two vertex-disjoint
graphs \(G_1\) and \(G_2\), let \(G_1\vee G_2\) denote their join, and
let \(\overline{K}_t\) denote the empty graph on \(t\) vertices. For
all sufficiently large \(n\), their unique extremal graph is $K_s\vee\overline{K}_{n-s}$ when $k=2s+1$, and
$K_s\vee\left(K_2\cup\overline{K}_{n-s-2}\right)$ when \(k=2s+2\). These graphs are precisely the \(2\)-shadows of
\(\mathcal{S}_{n,s}^{(r)}\) and
\(\mathcal{S}_{n,s}^{(r),+}\), respectively.

More recently, Wang, Sun and Bu \cite{WangSunBu} obtained the
analogous conclusion for the \(t\)-clique spectral radius of graphs.
Since the \(t\)-clique tensor of a graph is the adjacency tensor of
its \(t\)-uniform clique hypergraph, their result concerns a special
class of uniform hypergraphs. The present theorem treats arbitrary
\(r\)-graphs, for which there need not be an underlying graph
controlling all hyperedges, and the requirement that the hyperedges
representing a Berge cycle be pairwise distinct becomes an additional
difficulty.

The proof proceeds in three steps. We first apply the hereditary edge
bound \eqref{eq:1} to assign each edge to one of its vertices in such a
way that the number of edges assigned to any given vertex is bounded
by a constant depending only on $r$ and $k$. Let $x_1\ge x_2\ge\cdots\ge x_n$ be the entries of a normalized positive eigenvector corresponding to
the spectral radius. The preceding assignment implies that, after a
fixed number of the largest entries have been selected, the total
contribution of edges containing at least two of the remaining
vertices is of lower order. A vertex-replacement argument then
restricts the collections of $(r-1)$-subsets that can extend many of
the remaining vertices to edges. Consequently, the leading term is
determined by the maximization of an explicit function of the total
$r$-mass carried by the largest entries, which yields the asymptotic
formula.

We next establish a quantitative stability result for the associated
eigenvector. If the spectral radius is sufficiently close to the
asymptotic maximum, then the first $s$ entries are close to one
another, the sum of their $r$th powers is close to
$(r-1)/r$, and there exists a set $S$ consisting of the
corresponding vertices such that, for all but $o(n)$ vertices
$v\notin S$, $A\cup{v}\in E(\mathcal H)$ for every $A\in\binom{S}{r-1}$.
Finally, by comparing the weighted contributions of the edges present
in $\mathcal H$ with those of the corresponding missing edges in
the proposed extremal construction, we show that every vertex outside
$S$ has this property. The odd and even cases are treated
simultaneously up to the final structural step. When $k$ is odd, no
edge can contain two vertices outside $S$. When $k$ is even, every
edge containing at least two vertices outside $S$ must contain the
same pair of such vertices.

\section{Preliminaries}

For an $r$-graph $\Hh$, its $2$-shadow is
\[
\partial_2\Hh
:=
\left\{
\{x,y\}:\{x,y\}\subseteq e\text{ for some }e\in E(\Hh)
\right\}.
\]
We call $\Hh$ connected if $\partial_2\Hh$ is connected. Every Berge
cycle in $\Hh$ projects to an ordinary cycle on the same base vertices
in $\partial_2\Hh$.

For a vertex $v$ and a set $U\subseteq V(\Hh)\setminus\{v\}$, define
the restricted link
\[
L_U(v)
:=
\left\{
F\in\binom{U}{q}:F\cup\{v\}\in E(\Hh)
\right\}.
\]
When $U=V(\Hh)\setminus\{v\}$, we simply write $L(v)$. For a vector
$x=(x_v)$ and a set $A$ of vertices, write
\[
x_A:=\prod_{v\in A}x_v.
\]
For nonnegative real numbers $z_1,\ldots,z_m$, let
\[
e_j(z_1,\ldots,z_m)
:=
\sum_{F\in\binom{[m]}{j}}\prod_{i\in F}z_i
\]
be the $j$th elementary symmetric polynomial.

If $\Hh$ is connected and has at least one edge, the tensor
Perron--Frobenius theorem
\cite{ChangPearsonZhang2008,CooperDutle2012} gives a positive vector
attaining the maximum in \eqref{eq:2}. We call
such a normalized vector a \emph{Perron vector}. It satisfies
\begin{equation}\label{eq:eigen-equation}
\rho(\Hh)x_v^{r-1}
=
\sum_{\substack{e\in E(\Hh)\\ v\in e}}x_{e\setminus\{v\}},
\end{equation}
for every $v\in V(\Hh)$.

\begin{lemma}[{\cite[Theorem 3.1]{CooperDutle2012}}]\label{lem:components}
If $\Hh_1,\ldots,\Hh_t$ are the connected components of an $r$-graph
$\Hh$, then
\[
\rho(\Hh)=\max_{1\le i\le t}\rho(\Hh_i).
\]
\end{lemma}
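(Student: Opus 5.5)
The plan is to prove the two inequalities $\rho(\Hh)\ge\max_i\rho(\Hh_i)$ and $\rho(\Hh)\le\max_i\rho(\Hh_i)$ directly from the variational characterization \eqref{eq:2}. The key structural fact is that the components $\Hh_1,\ldots,\Hh_t$ have pairwise disjoint vertex sets $V_1,\ldots,V_t$ partitioning $V(\Hh)$. Every edge lies inside a single $V_i$, since all pairs of vertices of an edge are adjacent in $\partial_2\Hh$. Consequently, for every nonnegative $x$,
\[
P_{\Hh}(x)=\sum_{i=1}^t P_{\Hh_i}(x|_{V_i}),
\]
where $x|_{V_i}$ denotes the restriction of $x$ to $V_i$.

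For the lower bound, fix $i$ and take a maximizer $y$ for $\rho(\Hh_i)$ on $V_i$. Extend $y$ by zero to all of $V(\Hh)$. The extended vector still satisfies $\sum_v x_v^r=1$, and by the decomposition above its value of $P_{\Hh}$ equals $P_{\Hh_i}(y)=\rho(\Hh_i)$. Hence $\rho(\Hh)\ge\rho(\Hh_i)$ for every $i$.

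For the upper bound, let $x$ be feasible for $\Hh$ and put $c_i=\sum_{v\in V_i}x_v^r$, so that $\sum_i c_i=1$. The function $P_{\Hh_i}$ is homogeneous of degree $r$. If $c_i>0$, we may therefore rescale $x|_{V_i}$ by $c_i^{-1/r}$ to obtain a feasible vector for $\Hh_i$, which gives
\[
P_{\Hh_i}(x|_{V_i})\le c_i\,\rho(\Hh_i).
\]
If $c_i=0$, then $x|_{V_i}=0$ and the same bound holds trivially. Summing over $i$ yields
\[
P_{\Hh}(x)\le\sum_i c_i\rho(\Hh_i)\le\max_i\rho(\Hh_i).
\]

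One point needs care: components with no edges, such as isolated vertices. For these, $\rho(\Hh_i)=0$, the decomposition is unaffected, and the argument above goes through unchanged. I expect no real obstacle here. The only step that needs any checking is that every edge lies inside a single component, and this follows immediately from the definition of connectivity through $\partial_2\Hh$.
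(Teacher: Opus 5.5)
Your proof is correct, but it takes a different route from the paper. The paper gives no argument of its own and simply cites Cooper and Dutle \cite[Theorem 3.1]{CooperDutle2012}. That result is a statement about the spectrum of a disjoint union: the set of eigenvalues of the adjacency tensor, defined through the characteristic polynomial, is the union of the spectra of the parts.

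Equality of spectral radii then follows by taking largest moduli. To match this with the variational definition \eqref{eq:2} that the paper actually uses, one also needs a Perron--Frobenius-type fact: for the nonnegative symmetric adjacency tensor, the largest eigenvalue modulus equals the maximum of $P_{\Hh}$ over nonnegative vectors with $\sum_v x_v^r=1$.

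Your argument works directly with \eqref{eq:2}. It uses three ingredients:
\begin{itemize}
\item the splitting $P_{\Hh}(x)=\sum_i P_{\Hh_i}(x|_{V_i})$, valid because every edge lies inside one component of $\partial_2\Hh$;
\item extension by zero for the lower bound;
\item degree-$r$ homogeneity together with the convex combination $\sum_i c_i\rho(\Hh_i)$ for the upper bound.
\end{itemize}
This is elementary and self-contained, and it needs no tensor eigenvalue theory. It yields only the radius rather than the whole spectrum. However, the radius is all the paper needs: in the proof of Theorem~\ref{thm:main} the lemma is used only to pass from an extremal $\Hh_n$ to a component $\Hh_n^\star$ with $\rho(\Hh_n^\star)=\rho(\Hh_n)$. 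Your treatment of edgeless components and of the case $c_i=0$ is fine.
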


We next record the edge bound of F\"uredi, Kostochka and Luo.

\begin{theorem}[F\"uredi--Kostochka--Luo \cite{FKL1}]\label{thm:FKL}
Let $r\ge 3$, $k\ge r+3$, and $n\ge k$. If an $n$-vertex $r$-graph
$\Hh$ is Berge-$C_{\ge k}$-free, then $e(\Hh)\le \frac{n-1}{k-2}\binom{k-1}{r}$.
\end{theorem}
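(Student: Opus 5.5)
The plan is to prove the bound by induction on $n$. I split into a separable case and a $2$-connected case, and in the $2$-connected case I either delete a low-degree vertex or use a Pósa-type rotation argument to build a long Berge cycle. Put $c_k:=\binom{k-1}{r}/(k-2)$, so the target is $e(\Hh)\le c_k(n-1)$. For an $m$-vertex $r$-graph with $r\le m<k$ I only use the trivial bound $e\le\binom{m}{r}$. Since $\binom{m}{r}/(m-1)$ is nondecreasing in $m$, this gives $\binom{m}{r}\le c_k(m-1)$ for all $r\le m\le k-1$. Hence the inequality holds for every $r$-graph on fewer than $k$ vertices, which serves as the base of the induction.

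\emph{Step 1: reduction to the $2$-connected case.} Suppose $\partial_2\Hh$ is disconnected or has a cut vertex. Every hyperedge spans a clique in $\partial_2\Hh$, and a clique lies inside a single block. So $E(\Hh)$ splits as $E(\Hh_1)\cup E(\Hh_2)$, where $\Hh_1$ and $\Hh_2$ have $n_1,n_2<n$ vertices, share at most one vertex, and satisfy $n_1+n_2\le n+1$. Both parts are still Berge-$C_{\ge k}$-free. Applying the induction hypothesis, or the small-case bound above when $n_i<k$, gives
\[
e(\Hh)\le c_k(n_1-1)+c_k(n_2-1)\le c_k(n-1).
\]
We may therefore assume that $\partial_2\Hh$ is $2$-connected and $n\ge k$.

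\emph{Step 2: degree deletion.} Suppose some vertex $v$ has $\deg_\Hh(v)\le c_k$. Delete $v$ together with all edges through it; the result is Berge-$C_{\ge k}$-free on $n-1$ vertices. The induction hypothesis then gives
\[
e(\Hh)\le c_k(n-2)+c_k=c_k(n-1).
\]
The remaining case is a $2$-connected $\Hh$ with minimum degree greater than $c_k$.

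\emph{Step 3: a long cycle from high minimum degree.} This is the main obstacle, a Dirac/Kopylov-type statement for Berge cycles. I would work in the bipartite incidence graph of $\Hh$, where a Berge cycle of length $\ell$ is exactly a cycle of length $2\ell$. Take a longest Berge path $P=v_1e_1v_2\cdots e_{m-1}v_m$ and run Pósa rotations on it. Maximality forces every edge through $v_1$ that is not among the $e_i$ to lie inside $V(P)$. Each hyperedge $e\ni v_1$ that contains a vertex $v_{i+1}$ gives a rotation, provided $e$ is distinct from the path edges; this distinctness is the extra difficulty compared with graphs. Counting the edges at an endpoint, those inside $V(P)$ number at most about $\binom{|V(P)|-1}{r-1}$. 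Together with $\deg(v_1)>c_k$, this should force a rich set of rotation endpoints. Then $2$-connectivity lets me close a cycle, in the spirit of Kopylov's argument, of length at least $\min\{m,k\}$. If the path is short, iterated rotations instead produce a dense set of fewer than $k$ vertices carrying more than $c_k$ edges per vertex, which is impossible.

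The delicate points are twofold. First, the identity $c_k=\binom{k-2}{r-1}\frac{k-1}{r(k-2)}$ must give exactly enough room so that the degree threshold beats the number of edges that fit inside a short path. Second, the hypothesis $k\ge r+3$ must be used so that enough distinct hyperedges remain available to close the cycle. I expect both the careful bookkeeping of edge-distinctness during rotations and the Kopylov-style closing step to carry essentially all the difficulty.
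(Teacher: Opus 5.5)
The paper gives no proof of this statement. It is quoted from \cite{FKL1} and used only through its hereditary consequence, Lemma~\ref{lem:sparsity}, so your proposal has to stand on its own. Your base case and Steps~1--2 are correct; this is the classical Erd\H{o}s--Gallai/Dirac induction. But Step~3 carries the whole theorem, and you only give a heuristic for it. Worse, the statement Step~3 is supposed to prove is false in the setting you reduced to. $2$-connectivity of $\partial_2\Hh$ does not prevent a single hyperedge from being the only link between two dense parts. For $r\ge 4$, take disjoint $(k-1)$-sets $A,B$ and let $\Hh=\binom{A}{r}\cup\binom{B}{r}\cup\{h\}$, where $|h\cap A|=2$ and $|h\cap B|=r-2\ge 2$. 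Then:
\begin{itemize}
\item $\partial_2\Hh$ is $2$-connected and $n=2k-2\ge k$;
\item every degree is at least $\binom{k-2}{r-1}>\frac{k-1}{r(k-2)}\binom{k-2}{r-1}=c_k$;
\item any Berge cycle of length at least $k$ meets both $A$ and $B$. So at least two of its consecutive base pairs cross between them, and only $h$ can represent such a pair. Hence $\Hh$ has no such cycle.
\end{itemize}
Your claim for short paths is also wrong as stated. $K^{(r)}_{k-1}$ has $k-1$ vertices, all of degree $\binom{k-2}{r-1}>c_k$. On a set $U$ with $|U|<k$, only the ratio $e(\Hh[U])/|U|$ is forced below $c_k$. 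Minimum degree gives merely $e(\Hh[U])/|U|>c_k/r$, even when all edges at $U$ lie inside $U$. So that case cannot be closed by a density count. It must use vertices off the path through connectivity, which is exactly where the distinctness of hyperedges bites.

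To make the scheme viable, you would first have to split also at cut vertices of the incidence bipartite graph, in particular at a single hyperedge $h$ whose removal disconnects $\Hh$. This is affordable: then $e(\Hh)\le c_k(n_1-1)+c_k(n_2-1)+1\le c_k(n-1)$, because $2\le r\le k-3$ gives $c_k\ge\binom{k-1}{2}/(k-2)=\frac{k-1}{2}\ge 1$. After that you would still need an actual proof of a Dirac--Kopylov-type theorem for Berge cycles. It would say that every $r$-graph with $2$-connected incidence graph, $n\ge k$, and minimum degree greater than $c_k$ contains a Berge cycle of length at least $k$. This is a substantial result in its own right. The rotations must keep all representing hyperedges pairwise distinct, and the closing step must supply two distinct hyperedges at each junction. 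Your proposal supplies neither the rotation lemma nor the closing argument. As it stands, the bound is proved only modulo an unproved key lemma, and in the form you stated that lemma is false.
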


Put $\kappa:=\binom{k-1}{r}$. The following weaker hereditary form is all that we need.

\begin{lemma}\label{lem:sparsity}
Every Berge-$C_{\ge k}$-free $r$-graph $\Hh$ satisfies
\begin{equation}\label{eq:uniform-sparsity}
e(\Hh[U])\le \kappa |U|
\end{equation}
for every $U\subseteq V(\Hh)$.
\end{lemma}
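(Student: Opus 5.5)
The plan is to reduce to Theorem~\ref{thm:FKL} using heredity: a sub-hypergraph of a Berge-$C_{\ge k}$-free hypergraph is again Berge-$C_{\ge k}$-free. Fix $U\subseteq V(\Hh)$ and put $m=|U|$. The induced hypergraph $\Hh[U]$ is an $m$-vertex $r$-graph. Any Berge cycle in $\Hh[U]$ uses vertices and edges of $\Hh$, so it is a Berge cycle in $\Hh$. Hence $\Hh[U]$ is Berge-$C_{\ge k}$-free. We then split according to whether $m\ge k$ or $m<k$.

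\emph{Case $m\ge k$.} Here Theorem~\ref{thm:FKL} applies to $\Hh[U]$; its hypotheses $r\ge 3$ and $k\ge r+3$ hold because $k\ge 2r+1\ge r+3$ when $r\ge 3$ (in fact $r\ge 2$ suffices for this inequality). It gives
\[
e(\Hh[U])\le \frac{m-1}{k-2}\binom{k-1}{r}\le \kappa m,
\]
since $k-2\ge 1$ and $m-1<m$.

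\emph{Case $m<k$.} Theorem~\ref{thm:FKL} is not available, so we count directly: $\Hh[U]$ has at most $\binom{m}{r}$ edges. If $m<r$ there are no edges at all. Otherwise $m\le k-1$ gives $\binom{m}{r}\le\binom{k-1}{r}=\kappa$, and since $m\ge r\ge 1$ we get $\kappa\le\kappa m$. The empty set $U=\emptyset$ is trivial.

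There is no real obstacle. The only points that need care are checking that the parameter hypotheses of Theorem~\ref{thm:FKL} follow from $k\ge 2r+1$, and treating the small case $m<k$ separately, since Theorem~\ref{thm:FKL} requires $n\ge k$. The constant $\kappa$ is deliberately generous, so no sharper estimate is needed.
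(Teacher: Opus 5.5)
Your proposal is correct and follows the paper's proof: you split into the cases $|U|<k$, where $e(\Hh[U])\le\binom{|U|}{r}\le\kappa\le\kappa|U|$, and $|U|\ge k$, where you apply Theorem~\ref{thm:FKL} to the Berge-$C_{\ge k}$-free hypergraph $\Hh[U]$. Your explicit check that $k\ge 2r+1$ implies the hypothesis $k\ge r+3$ is a detail the paper leaves implicit.
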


\begin{proof}
If $|U|<k$, then
\[
e(\Hh[U])\le \binom{|U|}{r}\le \kappa\le \kappa|U|
\]
whenever $U\ne\varnothing$, while the empty case is trivial. If
$|U|\ge k$, then $\Hh[U]$ is again Berge-$C_{\ge k}$-free, and
Theorem~\ref{thm:FKL} gives
\[
e(\Hh[U])
\le
\frac{|U|-1}{k-2}\binom{k-1}{r}
\le
\kappa|U|.
\]
\end{proof}

The following assignment lemma is an immediate consequence of
Hall's marriage theorem.

\begin{lemma}\label{lem:bounded-assignment}
Let $\Ff$ be a finite family of sets, and let $B$ be a vertex set such
that every member of $\Ff$ meets $B$. Suppose that, for some positive
integer $K$,
\[
|\Ff'|
\le
K\left|B\cap\bigcup_{e\in\Ff'}e\right|
\]
for every subfamily $\Ff'\subseteq\Ff$. Then there exists a map $\psi:\Ff\to B$ such that
$\psi(e)\in e\cap B$ for every $e\in\Ff$ and $|\psi^{-1}(b)|\le K$ for every $b\in B$.
\end{lemma}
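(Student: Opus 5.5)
We reduce Lemma~\ref{lem:bounded-assignment} to Hall's theorem for a bipartite graph in which each vertex of $B$ is replaced by $K$ copies. Let $\Gamma$ be the bipartite graph with parts $\Ff$ and $B\times[K]$. We join $e\in\Ff$ to $(b,i)$ whenever $b\in e\cap B$, for every $i\in[K]$. A matching of $\Gamma$ that saturates $\Ff$ gives the desired map. Indeed, if $e$ is matched to $(b,i)$, we set $\psi(e)=b$. Then $\psi(e)\in e\cap B$ by construction. Moreover, $\psi^{-1}(b)$ consists of members of $\Ff$ matched to distinct copies $(b,1),\ldots,(b,K)$, so $|\psi^{-1}(b)|\le K$. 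Since each member of $\Ff$ meets $B$, every $e\in\Ff$ has at least $K$ neighbours in $\Gamma$.

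It therefore suffices to check Hall's condition for the side $\Ff$. For a subfamily $\Ff'\subseteq\Ff$, its neighbourhood in $\Gamma$ is exactly
\[
N_\Gamma(\Ff')=\Bigl(B\cap\bigcup_{e\in\Ff'}e\Bigr)\times[K],
\]
so that
\[
|N_\Gamma(\Ff')|=K\left|B\cap\bigcup_{e\in\Ff'}e\right|\ge|\Ff'|
\]
by the hypothesis. Hall's marriage theorem then yields a matching saturating $\Ff$, which completes the argument.

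There is no real obstacle here. The only points to verify are that the neighbourhood of $\Ff'$ is exactly the product set displayed above, and that finiteness of $\Ff$ is what allows Hall's theorem to apply. The bipartite graph is finite on the $\Ff$ side, and only the finitely many vertices of $B$ lying in $\bigcup\Ff$ matter, so we may discard the rest of $B$. The case $\Ff=\varnothing$ is trivial.
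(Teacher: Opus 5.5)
Your proposal is correct and follows essentially the same route as the paper: the bipartite graph with $K$ labelled copies of each $b\in B$, the identity $N(\Ff')=\bigl(B\cap\bigcup_{e\in\Ff'}e\bigr)\times[K]$ to verify Hall's condition, and reading $\psi$ off a saturating matching. Your extra remarks on finiteness and the empty family are harmless additions.
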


\begin{proof}
Construct an auxiliary bipartite graph $G$ with bipartition
$\Ff$ and $\widetilde B:=B\times\{1,\ldots,K\}$. Thus, the right part contains $K$ labelled copies
\[
(b,1),\ldots,(b,K)
\]
of each vertex $b\in B$. For $e\in\Ff$ and
$(b,j)\in\widetilde B$, join $e$ to $(b,j)$ if and only if
$b\in e\cap B$.

We verify Hall's condition for the left part $\Ff$. Let
$\Ff'\subseteq\Ff$ be arbitrary. A vertex $(b,j)\in\widetilde B$
belongs to the neighborhood $N_G(\Ff')$ precisely when
$b\in e\cap B$ for at least one $e\in\Ff'$. Therefore, we have 
\[
N_G(\Ff')
=
\left(
 B\cap\bigcup_{e\in\Ff'}e
\right)
\times\{1,\ldots,K\},
\]
and hence $|N_G(\Ff')|
=
K\left|B\cap\bigcup_{e\in\Ff'}e\right|$.
By the hypothesis, we have $|N_G(\Ff')|
=
K\left|B\cap\bigcup_{e\in\Ff'}e\right|
\ge |\Ff'|$.
Thus Hall's condition holds for every subfamily
$\Ff'\subseteq\Ff$. By Hall's marriage theorem, $G$ has a matching
$M$ that saturates every vertex of the left part $\Ff$.

For each $e\in\Ff$, let $(b,j)$ be the unique vertex of
$\widetilde B$ matched to $e$ by $M$, and define $\psi(e):=b$. Since $e$ is adjacent to $(b,j)$, the definition of $G$ gives
$b\in e\cap B$, so $\psi(e)\in e\cap B$. Finally, for any vertex $b\in B$, if $\psi(e)=b$, then $e$ is matched to one of
the $K$ copies $(b,1),\ldots,(b,K)$. Because $M$ is a matching, each of these copies is matched to at most
one member of $\Ff$. Consequently, at most $K$ members of $\Ff$ can
be mapped to $b$, and therefore $|\psi^{-1}(b)|\le K$.
This proves the lemma.
\end{proof}

\begin{lemma}\label{lem:orientation}
Let $\Hh$ be a Berge-$C_{\ge k}$-free $r$-graph. There exists a map
\[
\phi:E(\Hh)\to V(\Hh)
\]
such that $\phi(e)\in e$ for every $e\in E(\Hh)$ and $|\phi^{-1}(v)|\le \kappa$ for every $v\in V(\Hh)$.
\end{lemma}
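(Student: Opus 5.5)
We obtain the map $\phi$ directly from Lemma~\ref{lem:bounded-assignment}, applied with $\Ff:=E(\Hh)$, $B:=V(\Hh)$ and $K:=\kappa$. Since every edge is a nonempty subset of $V(\Hh)$, every member of $\Ff$ meets $B$. Hence the only thing to verify is the Hall-type hypothesis
\[
|\Ff'|\le \kappa\left|\bigcup_{e\in\Ff'}e\right|
\qquad\text{for every }\Ff'\subseteq E(\Hh).
\]

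To check this, fix $\Ff'\subseteq E(\Hh)$ and let $U:=\bigcup_{e\in\Ff'}e$. Every edge of $\Ff'$ lies inside $U$, so $\Ff'\subseteq E(\Hh[U])$ and $|\Ff'|\le e(\Hh[U])$. Lemma~\ref{lem:sparsity} applies to every vertex subset of a Berge-$C_{\ge k}$-free $r$-graph, and it gives $e(\Hh[U])\le\kappa|U|$. Combining the two inequalities yields $|\Ff'|\le \kappa|U|=\kappa|B\cap U|$, which is exactly the required condition. The case $\Ff'=\varnothing$ is trivial.

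Lemma~\ref{lem:bounded-assignment} therefore produces $\psi:E(\Hh)\to V(\Hh)$ with $\psi(e)\in e$ and $|\psi^{-1}(v)|\le\kappa$ for every $v$, and we take $\phi:=\psi$. There is no real obstacle here. The only point to notice is that the sparsity bound must hold hereditarily, for the union of an arbitrary subfamily of edges. Lemma~\ref{lem:sparsity} already supplies this by absorbing the small cases $|U|<k$, where Theorem~\ref{thm:FKL} does not apply, into the bound $\binom{|U|}{r}\le\kappa$.
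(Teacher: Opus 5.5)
Your proposal is correct and matches the paper's proof: you apply Lemma~\ref{lem:bounded-assignment} with $B=V(\Hh)$ and $K=\kappa$, and you verify the Hall-type condition through Lemma~\ref{lem:sparsity} applied to $U=\bigcup_{e\in\Ff'}e$. Your remark that the hereditary form of the sparsity bound is the only real input is also accurate.
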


\begin{proof}
For every subfamily \(\mathcal{F}'\subseteq E(H)\), put $U:=\bigcup_{e\in\mathcal{F}'}e$.
Since every edge of \(\mathcal{F}'\) is contained in \(U\), Lemma
\ref{lem:sparsity} gives $|\mathcal{F}'|
   \le e\bigl(H[U]\bigr)
   \le \kappa |U|$.
Thus the hypothesis of Lemma~\ref{lem:bounded-assignment} holds with
\(B=V(H)\) and \(K=\kappa\). Applying that lemma yields a map $\varphi:E(H)\longrightarrow V(H)$ such that \(\varphi(e)\in e\) for every \(e\in E(H)\) and
$|\varphi^{-1}(v)|\le \kappa$ for every \(v\in V(H)\), as required.
\end{proof}

We will also use the following sharp inequality for the core
coordinates.

\begin{lemma}\label{lem:elementary-symmetric}
Let $a_1,\ldots,a_s\ge 0$ and put $T=\sum_{i=1}^s a_i^r$. Then we have
\begin{equation}\label{eq:elementary-symmetric}
e_q(a_1,\ldots,a_s)
\le
\binom{s}{q}\left(\frac{T}{s}\right)^{q/r}.
\end{equation}
Equality holds if and only if $a_1=\cdots=a_s$.
\end{lemma}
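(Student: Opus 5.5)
We will deduce the inequality from two classical facts: Maclaurin's inequality for elementary symmetric means, and the power-mean inequality. Write $p_j:=e_j(a_1,\ldots,a_s)/\binom{s}{j}$ for the normalized elementary symmetric means. If all $a_i=0$, then both sides of \eqref{eq:elementary-symmetric} vanish, the $a_i$ are all equal, and there is nothing to prove. So assume some $a_i>0$.

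The first step is Maclaurin's inequality, $p_q^{1/q}\le p_1$. This gives
\[
e_q(a_1,\ldots,a_s)\le \binom{s}{q}\Bigl(\frac{1}{s}\sum_{i=1}^s a_i\Bigr)^{q}.
\]
The second step is the power-mean inequality between the arithmetic mean and the $r$th power mean:
\[
\frac1s\sum_{i=1}^s a_i\le\Bigl(\frac{T}{s}\Bigr)^{1/r}.
\]
Raising this to the $q$th power and combining it with the first step yields $e_q\le\binom{s}{q}(T/s)^{q/r}$, which is \eqref{eq:elementary-symmetric}. Here $q=r-1\ge 2$ and $s\ge r>q$, so $1\le q\le s$ and Maclaurin's inequality applies.

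For the equality case, note that if $a_1=\cdots=a_s=a$, then $e_q=\binom{s}{q}a^q$ and $T/s=a^r$, so equality holds. Conversely, suppose equality holds in \eqref{eq:elementary-symmetric}. Then both intermediate inequalities must be tight, because the left side is positive unless fewer than $q$ of the $a_i$ are nonzero. That degenerate situation is handled separately: if equality held with $e_q=0$, then the right side would also be $0$, forcing $T=0$, which we excluded. Tightness in the power-mean inequality with exponent $r>1$ forces all $a_i$ to be equal. This uses only the strict convexity of $t\mapsto t^r$, so the equality characterization of Maclaurin's inequality (which would need positivity) is not even required.

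The only point needing care is the degenerate case in which some $a_i$ vanish. Maclaurin's inequality is usually stated for positive reals, but it extends to nonnegative ones by continuity. We use it only as an inequality, and the equality analysis rests entirely on the power-mean step, so no issue arises.
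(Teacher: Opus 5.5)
Your proof is correct, and it reaches the bound by a shorter route than the paper. The paper first applies H\"older's inequality to the $\binom{s}{q}$ monomials, obtaining $e_q(a_1,\ldots,a_s)\le\binom{s}{q}^{1/r}\bigl(\sum_{F}\prod_{i\in F}a_i^{r/q}\bigr)^{q/r}$. It then substitutes $z_i=a_i^{r/q}$, so that $\sum_i z_i^q=T$, and applies Maclaurin's inequality and the power-mean inequality between the arithmetic and $q$th power means to the $z_i$.

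You apply Maclaurin directly to the $a_i$ and then compare the arithmetic mean with the $r$th power mean, so the H\"older step and the change of variables disappear. This makes the argument shorter. It also never uses $q=r-1$: the paper's H\"older exponents $r/q$ and $r$ are conjugate precisely because $q=r-1$, whereas your inequality holds for all $1\le q\le s$ and $r\ge 1$.

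In both proofs the equality case rests on the final strict power-mean step. You spell this out, while the paper only asserts it. A minor point: the intermediate inequalities are tight simply because the two ends of the chain coincide. Your positivity discussion and the separate $e_q=0$ case are therefore harmless but unnecessary.
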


\begin{proof}
By H\"older's inequality, we have
\[
e_q(a_1,\ldots,a_s)
\le
\binom{s}{q}^{1/r}
\left(
\sum_{F\in\binom{[s]}{q}}
\left(\prod_{i\in F}a_i\right)^{r/q}
\right)^{q/r}.
\]
Set $z_i=a_i^{r/q}$. Then $\sum_i z_i^q=T$. By Maclaurin's
inequality and the power-mean inequality, we obtain
\[
\frac{e_q(z_1,\ldots,z_s)}{\binom{s}{q}}
\le
\left(\frac{z_1+\cdots+z_s}{s}\right)^q
\le
\frac{z_1^q+\cdots+z_s^q}{s}
=
\frac{T}{s}.
\]
Combining the two estimates proves \eqref{eq:elementary-symmetric}.
Equality throughout is possible exactly when all $a_i$ are equal.
\end{proof}

For a uniform hypergraph $\Ll$, define $\supp(\Ll):=\bigcup_{e\in E(\Ll)}e$. For a new vertex $y$, its \emph{cone} over $\Ll$ is
\[
y*\Ll:=\{\{y\}\cup e:e\in E(\Ll)\}.
\]

\begin{lemma}\label{lem:clone}
Let $\Ll$ be a $q'$-graph with $q'\ge 2$, let
$t=|\supp(\Ll)|$, and assume $e(\Ll)\ge 2$. If
$y_1,\ldots,y_t$ are distinct vertices outside $\supp(\Ll)$, then $\bigcup_{i=1}^t(y_i*\Ll)$ contains a Berge cycle of length $2t$.
\end{lemma}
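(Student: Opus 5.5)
The plan is to build the cycle explicitly. It will alternate between the new vertices $y_i$ and the vertices of $\supp(\Ll)$:
\[
y_1,w_1,y_2,w_2,\ldots,y_t,w_t,
\]
where $w_1,\ldots,w_t$ is a suitable cyclic ordering of $\supp(\Ll)$ (indices mod $t$). Each $y_i$ is incident to two consecutive cycle pairs, $\{w_{i-1},y_i\}$ and $\{y_i,w_i\}$. For these I will choose edges $\{y_i\}\cup e_i$ and $\{y_i\}\cup e_i'$ of $y_i*\Ll$ with $w_{i-1}\in e_i$, $w_i\in e_i'$ and $e_i\ne e_i'$.

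Distinctness of all $2t$ hyperedges is then automatic across different indices, since edges of $y_i*\Ll$ contain $y_i$ and no other $y_j$. Within one index it is exactly the condition $e_i\ne e_i'$. The base vertices are distinct by construction, so we get a Berge cycle of length $2t$.

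It remains to choose the cyclic order so that, for every consecutive pair $(a,b)=(w_{i-1},w_i)$, there are distinct edges $e\ni a$ and $e'\ni b$ of $\Ll$. Every vertex of the support lies in some edge. So the only obstruction is that $a$ and $b$ both have degree $1$ in $\Ll$ and their unique edges coincide. To handle this, call $v$ \emph{private to} $e$ if $e$ is the only edge of $\Ll$ containing $v$, and let $P(e)\subseteq e$ be the set of vertices private to $e$. Colour the support so that each class $P(e)$ gets its own colour and every other vertex gets a fresh singleton colour. A cyclic order is good precisely when no two cyclically consecutive vertices share a colour.

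The key estimate is $|P(e)|\le t/2$ for every $e$. Since $e(\Ll)\ge 2$, there is an edge $f\ne e$. None of the $q'$ vertices of $f$ is private to $e$, so they lie in $\supp(\Ll)\setminus P(e)$. Hence
\[
t\ge |P(e)|+q'\ge 2|P(e)|,
\]
using $P(e)\subseteq e$, $|e|=q'$. Also $t\ge q'+1\ge 3$.

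Then I will invoke (or quickly reprove) the standard fact that $t\ge 3$ objects coloured so that every colour class has size at most $\lfloor t/2\rfloor$ can be placed on a $t$-cycle with no two adjacent of the same colour. The quick proof: list the vertices grouped by colour, largest class first, and fill positions $1,3,5,\ldots$ and then $2,4,\ldots$. A class of size at most $\lfloor t/2\rfloor$ then never occupies two cyclically adjacent positions. The wrap-around between positions $t$ and $1$ needs a one-line check using that the largest class is placed first.

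This colouring step is the only place where care is needed; the rest is bookkeeping. With the good order $w_1,\ldots,w_t$ in hand, choose for each $i$ the edges $e_i\ni w_{i-1}$ and $e_i'\ni w_i$ with $e_i\ne e_i'$, which is possible by goodness. This yields the Berge cycle of length $2t$ in $\bigcup_{i=1}^t(y_i*\Ll)$.
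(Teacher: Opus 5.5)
Your proof is correct and follows essentially the paper's route. The paper also isolates the private sets $P_e$ and proves $|P_e|\le t/2$ by the same count using a second edge. It then needs a Hamilton cycle in the complete multipartite graph whose parts are the $P_e$; it gets this from Dirac's theorem, where you instead give an explicit interleaving arrangement.

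One small correction to your sketch of that arrangement: the wrap-around pair $(t,1)$ is fine for any grouping of the classes. It is the interior pairs $(2a,2a+1)$ with $t$ even (index gap $t/2-1$) that require the largest class to be placed first.
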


\begin{proof}
Put $X=\supp(\Ll)$. Define a graph $\Gamma$ on $X$ by joining distinct
vertices $x,x'$ whenever there are distinct edges $e,e'\in E(\Ll)$
with $x\in e$ and $x'\in e'$. For $e\in E(\Ll)$, let
\[
P_e:=\{x\in e:d_{\Ll}(x)=1\}.
\]
Two vertices of $X$ are nonadjacent in $\Gamma$ if and only if they
belong to the same set $P_e$ for some $e\in E(\Ll)$. Hence $\Gamma$ is
a complete multipartite graph whose nontrivial parts are the nonempty
sets $P_e$.

Fix $e\in E(\Ll)$ and write $p=|P_e|$. Choose
$e'\in E(\Ll)\setminus\{e\}$. Since $P_e\cap e'=\varnothing$, we obtain
\[
q'=|e'|\le t-p,
\]
while $p\le q'$. Thus $p\le t/2$. Every vertex of $\Gamma$ therefore
has degree at least $t/2$. Since $t\ge 3$, Dirac's theorem gives a
Hamilton cycle
\[
x_1x_2\cdots x_t x_1
\]
in $\Gamma$.

For each $i$, choose distinct $e_i,e_i'\in E(\Ll)$ with
$x_i\in e_i$ and $x_{i+1}\in e_i'$, where $x_{t+1}=x_1$. Represent
$x_i y_i$ by $\{y_i\}\cup e_i$ and $y_i x_{i+1}$ by
$\{y_i\}\cup e_i'$. The two edges used at a fixed $y_i$ are distinct,
and edges associated with different cone vertices are automatically
distinct. Hence
\[
x_1,y_1,x_2,y_2,\ldots,x_t,y_t
\]
is the base sequence of a Berge cycle of length $2t$.
\end{proof}

\begin{corollary}\label{cor:link-type}
Let $k\in\{2s+1,2s+2\}$, and let $\Hh$ be Berge-$C_{\ge k}$-free.
Fix a vertex set $X$ and a $q$-graph
$\Ll\subseteq\binom{X}{q}$ with $|\supp(\Ll)|\ge s+1$. Then fewer
than $|\supp(\Ll)|$ vertices $y\notin X$ can satisfy
$L_X(y)=\Ll$.
\end{corollary}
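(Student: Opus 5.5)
We argue by contradiction using Lemma~\ref{lem:clone} with $q'=q=r-1\ge 2$. Put $t:=|\supp(\Ll)|\ge s+1$. Suppose that at least $t$ vertices $y_1,\ldots,y_t\notin X$ satisfy $L_X(y_i)=\Ll$.

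We first check the hypotheses of Lemma~\ref{lem:clone}. We need $e(\Ll)\ge 2$. If $\Ll$ had only one edge, then $|\supp(\Ll)|=q=r-1$. But $t\ge s+1\ge r+1$, since $s\ge r$ under our standing assumption $k\ge 2r+1$. Hence $e(\Ll)\ge 2$. The vertices $y_i$ lie outside $X\supseteq\supp(\Ll)$, so they are outside $\supp(\Ll)$ as the lemma requires.

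For each $i$ and each $F\in\Ll$, the set $\{y_i\}\cup F$ is an edge of $\Hh$, because $L_X(y_i)=\Ll$. Therefore $\bigcup_{i=1}^t (y_i*\Ll)\subseteq E(\Hh)$. These are genuine, pairwise distinct hyperedges of $\Hh$: cones over distinct $y_i$ give distinct sets, since $y_i\notin X$. Lemma~\ref{lem:clone} then produces a Berge cycle in $\Hh$ of length $2t\ge 2s+2\ge k$, using $k\in\{2s+1,2s+2\}$. This contradicts Berge-$C_{\ge k}$-freeness, so fewer than $t$ such vertices $y$ exist.

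The only delicate point is ensuring $e(\Ll)\ge 2$. This follows from $t\ge s+1>q$, as shown above. Everything else is a direct transfer of the cycle built by Lemma~\ref{lem:clone} into $\Hh$.
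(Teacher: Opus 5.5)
Your proof is correct and matches the paper's argument: $t\ge s+1>q$ forces $e(\Ll)\ge 2$ (the case $e(\Ll)=0$ is excluded simply because $t>0$), and Lemma~\ref{lem:clone} applied to $t$ cone vertices $y_1,\ldots,y_t$ gives a Berge cycle in $\Hh$ of length $2t\ge 2s+2\ge k$, a contradiction. Your explicit checks that $q\ge 2$ and that the $y_i$ lie outside $\supp(\Ll)$ make precise what the paper leaves implicit.
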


\begin{proof}
Put $t=|\supp(\Ll)|$. Since $q=r-1\le s<t$, the condition
$|\supp(\Ll)|=t$ implies $e(\Ll)\ge 2$. If there were $t$ vertices
$y\notin X$ with $L_X(y)=\Ll$, Lemma~\ref{lem:clone} would produce a
Berge cycle of length
\[
2t\ge 2s+2\ge k,
\]
a contradiction.
\end{proof}

\section{Asymptotics and Perron-vector stability}

We first verify that the two constructions are admissible.

\begin{lemma}\label{lem:candidates-legal}
The following statements hold.
\begin{itemize}
\item[(i)] $\Ss_{n,s}^{(r)}$ is Berge-$C_{\ge 2s+1}$-free.
\item[(ii)] $\Ss_{n,s}^{(r),+}$ is Berge-$C_{\ge 2s+2}$-free.
\end{itemize}
\end{lemma}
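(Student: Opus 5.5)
Both parts follow from one counting principle. In a Berge cycle with base vertices $v_1,\dots,v_\ell$ in cyclic order, no two vertices outside $S$ can be consecutive unless some single hyperedge contains both of them. So we bound how many consecutive pairs of $Y$-vertices can occur, and compare this with $|S|=s$.

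For (i), let $C$ be a Berge cycle in $\Ss_{n,s}^{(r)}$ with base vertices $v_1,\ldots,v_\ell$ and edges $e_1,\ldots,e_\ell$, where $\{v_i,v_{i+1}\}\subseteq e_i$. Every edge of $\Ss_{n,s}^{(r)}$ contains at most one vertex of $Y=V\setminus S$. Hence no two cyclically consecutive base vertices both lie in $Y$, and the $Y$-vertices of the cycle are pairwise non-consecutive. Each of them is followed by a vertex of $S$, and these followers are distinct. If the cycle has at least one $S$-vertex, this gives
\[
|\{i:v_i\in Y\}|\le|\{i:v_i\in S\}|\le s.
\]
If the cycle has no $S$-vertex, then $v_1,v_2$ are two $Y$-vertices in one edge, which is impossible. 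Therefore $\ell\le 2s<2s+1$.

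For (ii), the only edges with two vertices of $Y$ are those of the form $\{u,v\}\cup R$. The argument now has three steps.
\begin{enumerate}
\item Among consecutive base pairs $\{v_i,v_{i+1}\}$, at most one can have both vertices in $Y$. Such a pair must be contained in an edge with two $Y$-vertices, so it must equal $\{u,v\}$. Since the base vertices are distinct, $u$ and $v$ each appear once in the cycle. They can then be adjacent in the cyclic order at most once, unless $\ell=2$. Lengths $\ell\le 2$ are irrelevant here, because $2s+2\ge 2r+2>2$.
\item Contract the pair $u,v$ (when it is consecutive) into a single $Y$-block. The cyclic sequence then alternates in the sense that every $Y$-block is followed by a vertex of $S$, and distinct blocks are followed by distinct $S$-vertices. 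This gives
\[
\#\{\text{$Y$-blocks}\}\le s.
\]
The cycle must contain an $S$-vertex: otherwise every consecutive pair lies in $Y$, which requires $\ell=2$ with base vertices $\{u,v\}$, and that case is irrelevant by the first step.
\item The number of $Y$-vertices is at most the number of $Y$-blocks plus one, hence at most $s+1$. The number of $S$-vertices is at most $s$. Therefore $\ell\le 2s+1<2s+2$.
\end{enumerate}

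The only delicate point is this single-extra-$Y$-vertex bookkeeping in (ii). Distinctness of the edges $e_i$ is not needed at any stage, since we only use which vertex pairs can lie in a common edge. The main care is to state the block-counting cleanly, so that the cyclic wrap-around and the degenerate short cycles are handled.
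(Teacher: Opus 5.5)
Your proof is correct and follows the paper's argument. The paper projects the Berge cycle to an ordinary cycle in the $2$-shadow, namely $K_s\vee\overline{K}_{n-s}$ or $K_s\vee(K_2\cup\overline{K}_{n-s-2})$, where adjacency is exactly your condition of lying in a common hyperedge. It then performs the same alternation and block count, giving $b\le a$, or $b-1\le a$ when the pair $uv$ is used.
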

\begin{proof}
Every Berge cycle of length \(\ell\) in an \(r\)-graph projects to an
ordinary cycle of length \(\ell\) in its \(2\)-shadow. Indeed, if $v_1,e_1,v_2,e_2,\ldots,v_\ell,e_\ell$ is a Berge cycle, then $v_iv_{i+1}\in E(\partial_2H)$ for every \(i\), where indices are taken modulo \(\ell\). Since the
base vertices \(v_1,\ldots,v_\ell\) are distinct, they form an
ordinary cycle in \(\partial_2H\). It therefore suffices to bound the
lengths of cycles in the relevant \(2\)-shadows.

First consider \(\mathcal{S}_{n,s}^{(r)}\). Let \(S\) be its
distinguished \(s\)-set and put \(Y:=V\setminus S\). Then,  $\partial_2\mathcal{S}_{n,s}^{(r)}=K_s\vee\overline{K}_{n-s}$.
Let \(C\) be a cycle in this graph, and put $a:=|V(C)\cap S|$ and $b:=|V(C)\cap Y|$. Since \(Y\) is independent, no two vertices of \(V(C)\cap Y\) are
consecutive on \(C\). Hence \(b\le a\), and therefore $|C|=a+b\le2a\le2s$. Thus \(\partial_2\mathcal{S}_{n,s}^{(r)}\) contains no cycle of
length at least \(2s+1\), proving that
\(\mathcal{S}_{n,s}^{(r)}\) is Berge-\(C_{\ge2s+1}\)-free.

Now consider \(\mathcal{S}_{n,s}^{(r),+}\), and let
\(\{u,v\}\subseteq Y\) be its distinguished outside pair. Its
\(2\)-shadow is $\partial_2\mathcal{S}_{n,s}^{(r),+}
   =K_s\vee\left(K_2\cup\overline{K}_{n-s-2}\right)$,
where the copy of \(K_2\) is induced by \(\{u,v\}\).
Let \(C\) be a cycle in this graph, and define \(a\) and \(b\) as
above.

If \(C\) does not use the edge \(uv\), then no two outside vertices
are consecutive on \(C\), so \(b\le a\) and $|C|\le2s$. Suppose that \(C\) uses \(uv\). Then the vertices of
\(V(C)\cap Y\) form \(b-1\) blocks around \(C\): one block is the
two-vertex block \(\{u,v\}\), and every remaining outside vertex
forms a singleton block. Consecutive outside blocks must be separated
by vertices of \(S\). Consequently, we get $b-1\le a$.  It follows that $|C|=a+b\le2a+1\le2s+1$. Therefore the \(2\)-shadow contains no cycle of length at least
\(2s+2\), and hence
\(\mathcal{S}_{n,s}^{(r),+}\) is
Berge-\(C_{\ge2s+2}\)-free.
\end{proof}

\begin{lemma}\label{lem:candidate-lower-bound}
For fixed $r$ and $s$, we have
\[
\rho(\Ss_{n,s}^{(r)})
\ge
\left(L_{r,s}+o(1)\right)n^{q/r}.
\]
Consequently, we also have 
\[
\rho(\Ss_{n,s}^{(r),+})
\ge
\left(L_{r,s}+o(1)\right)n^{q/r}.
\]
\end{lemma}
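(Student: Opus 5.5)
We bound $\rho(\Ss_{n,s}^{(r)})$ from below by evaluating the variational characterization \eqref{eq:2} at a test vector with two levels. Put $m:=n-s=|Y|$. Fix a parameter $\theta\in(0,1)$ and define
\[
x_v=\left(\frac{\theta}{s}\right)^{1/r}\ (v\in S),\qquad
x_y=\left(\frac{1-\theta}{m}\right)^{1/r}\ (y\in Y).
\]
Then $\sum_v x_v^r=\theta+(1-\theta)=1$, so the vector is admissible and $\rho(\Ss_{n,s}^{(r)})\ge P_{\Ss_{n,s}^{(r)}}(x)$.

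We keep only the edges of the form $\{y\}\cup F$ with $y\in Y$ and $F\in\binom{S}{q}$; the remaining edges contribute nonnegatively. This gives
\[
P(x)\ \ge\ r\,m\binom{s}{q}\left(\frac{\theta}{s}\right)^{q/r}\left(\frac{1-\theta}{m}\right)^{1/r}
= r\binom{s}{q}s^{-q/r}\,\theta^{q/r}(1-\theta)^{1/r}\,m^{q/r}.
\]
We maximize $\theta^{q/r}(1-\theta)^{1/r}$ over $\theta$. The logarithmic derivative $\frac{q}{r\theta}-\frac{1}{r(1-\theta)}$ vanishes at $\theta=q/r$, where the value is $(q/r)^{q/r}(1/r)^{1/r}=q^{q/r}/r$. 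Hence
\[
\rho(\Ss_{n,s}^{(r)})\ \ge\ \binom{s}{q}\left(\frac{q}{s}\right)^{q/r} m^{q/r}=L_{r,s}\,(n-s)^{q/r}.
\]
Since $s$ is fixed, $(n-s)^{q/r}=(1+o(1))n^{q/r}$, which proves the first claim. The choice $\theta=q/r$ is consistent with the later stability statement that the core carries $r$-mass close to $(r-1)/r$.

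For the second claim, $\Ss_{n,s}^{(r)}$ is a subhypergraph of $\Ss_{n,s}^{(r),+}$ on the same vertex set. Therefore $P_{\Ss_{n,s}^{(r)}}(x)\le P_{\Ss_{n,s}^{(r),+}}(x)$ for every nonnegative $x$, and taking maxima in \eqref{eq:2} gives $\rho(\Ss_{n,s}^{(r)})\le\rho(\Ss_{n,s}^{(r),+})$. Both steps are routine; the only computation is the one-variable optimization in $\theta$.
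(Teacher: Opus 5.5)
Your proof is correct and is essentially the paper's argument. The paper directly plugs in the two-level vector $\alpha=(q/(rs))^{1/r}$ on $S$ and $\beta=(1/(r(n-s)))^{1/r}$ outside $S$, which is exactly your vector at $\theta=q/r$; it keeps only the edges meeting $Y$ in one vertex and obtains the second claim from $\Ss_{n,s}^{(r)}\subseteq\Ss_{n,s}^{(r),+}$, just as you do.
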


\begin{proof}
Assign the coordinate $\alpha:=\left(\frac{q}{rs}\right)^{1/r}$ to each vertex of $S$, and the coordinate $\beta:=\left(\frac{1}{r(n-s)}\right)^{1/r}$ to each vertex outside $S$. Then we have $s\alpha^r+(n-s)\beta^r=1$. The edges meeting the outside set in exactly one vertex contribute
\begin{align*}
r(n-s)\binom{s}{q}\alpha^q\beta
=
\binom{s}{q}\left(\frac{q}{s}\right)^{q/r}(n-s)^{q/r}=
\left(L_{r,s}+o(1)\right)n^{q/r}.
\end{align*}
All remaining contributions are nonnegative. The second assertion
follows from
$\Ss_{n,s}^{(r)}\subseteq\Ss_{n,s}^{(r),+}$.
\end{proof}

We also record that the candidate spectral radii strictly increase with
the number of vertices.

\begin{lemma}\label{lem:candidate-monotonicity}
For fixed $r$ and $s$, we have $\rho(\Ss_{m,s}^{(r)})<\rho(\Ss_{m+1,s}^{(r)})$
for $m\ge s+1$, and $\rho(\Ss_{m,s}^{(r),+})<\rho(\Ss_{m+1,s}^{(r),+})$ for $m\ge s+2$.
\end{lemma}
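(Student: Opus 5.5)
The plan is to view $\Ss_{m,s}^{(r)}$ as an induced subhypergraph of $\Ss_{m+1,s}^{(r)}$ and use the variational characterization \eqref{eq:2} together with positivity of the Perron vector. Obtain $\Ss_{m+1,s}^{(r)}$ from $\Ss_{m,s}^{(r)}$ by adding a new vertex $w$ to $Y$ together with the edges $\{w\}\cup F$, $F\in\binom{S}{q}$. The same applies to the even construction: keep the distinguished pair $\{u,v\}$ fixed and add the new vertex $w$ to $Y\setminus\{u,v\}$. The hypotheses $m\ge s+1$ (resp.\ $m\ge s+2$) guarantee that $Y$ is nonempty (resp.\ contains $u,v$), so the smaller hypergraph is well defined and has edges.

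First, I check connectivity. Since $s\ge r>q$, the set $\binom{S}{q}$ is nonempty and every outside vertex lies in an edge with all of $S$. Hence the 2-shadow of either construction is connected, and the Perron--Frobenius statement quoted before \eqref{eq:eigen-equation} gives a positive Perron vector $x$ for $\Hh_m:=\Ss_{m,s}^{(r)}$ (resp.\ $\Ss_{m,s}^{(r),+}$).

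Next, I build a test vector for $\Hh_{m+1}$. Extend $x$ by $x_w:=0$. This vector has unit $r$-norm and satisfies $P_{\Hh_{m+1}}(x)=P_{\Hh_m}(x)=\rho(\Hh_m)$, so $\rho(\Hh_{m+1})\ge\rho(\Hh_m)$. This only gives the weak inequality, so I perturb. For small $\varepsilon>0$, set $y_v:=(1-\varepsilon^r)^{1/r}x_v$ for $v\ne w$ and $y_w:=\varepsilon$. Then
\[
P_{\Hh_{m+1}}(y)\ge (1-\varepsilon^r)\rho(\Hh_m)+r\varepsilon(1-\varepsilon^r)^{q/r}\sum_{F\in\binom{S}{q}}x_F .
\]
The gain term is linear in $\varepsilon$ with a strictly positive coefficient, because $x$ is positive and $\binom{S}{q}\ne\varnothing$. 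The loss term is $O(\varepsilon^r)$ with $r\ge3$. Taking $\varepsilon$ small enough gives $P_{\Hh_{m+1}}(y)>\rho(\Hh_m)$, and therefore $\rho(\Hh_{m+1})>\rho(\Hh_m)$.

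There is no serious obstacle here. The only points to watch are that the positivity of $x$ uses connectivity, which is why the range conditions on $m$ matter, and that in the even case the added vertex must be placed so that $\Ss_{m,s}^{(r),+}$ is an induced subhypergraph of $\Ss_{m+1,s}^{(r),+}$. Both points are immediate from the definitions \eqref{eq:3} and \eqref{eq:4}.
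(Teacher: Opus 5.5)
Your argument is correct, but it proves the strict inequality by a different mechanism than the paper does.

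The paper uses the same embedding you do: the new vertex goes outside $S$, and in the even case outside the distinguished pair $\{u,v\}$. It then checks that the \emph{larger} hypergraph is connected and cites the general fact that a proper subhypergraph of a connected $r$-graph has strictly smaller spectral radius (Lu--Man, Lemma~2).

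You instead prove the needed instance of that fact directly. You start from the positive Perron vector of the \emph{smaller} hypergraph. The rescaling costs exactly $\varepsilon^r\rho(\Hh_m)$, while the new edges $\{w\}\cup F$ contribute $r\varepsilon(1-\varepsilon^r)^{q/r}\sum_{F\in\binom{S}{q}}x_F$, which is linear in $\varepsilon$ with a positive coefficient. Your displayed inequality is in fact an equality, and $r\ge 2$ would already suffice for the comparison.

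The two routes trade off as follows. Yours is self-contained and only needs $\sum_{F}x_F>0$, which follows from positivity of the smaller hypergraph's Perron vector on $S$. The paper's is shorter and covers arbitrary proper subhypergraphs, including adding edges without adding vertices, at the cost of an external citation.

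One small correction to your commentary: the range conditions on $m$ are not what secures connectivity. Both constructions are connected whenever $s\ge r$. The condition $m\ge s+2$ is needed only so that the even construction, with its outside pair, is defined at all.
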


\begin{proof}
We use the standard fact that if $\Hh$ is a connected $r$-uniform
hypergraph and $\Hh'$ is a proper subhypergraph of $\Hh$, then $\rho(\Hh')<\rho(\Hh)$; see, for example, \cite[Lemma~2]{LuMan2016}.

For the first assertion, identify $\Ss_{m,s}^{(r)}$ with the
subhypergraph of $\Ss_{m+1,s}^{(r)}$ obtained by deleting one vertex
outside the distinguished set $S$. Since $s\ge r$, the hypergraph
$\Ss_{m+1,s}^{(r)}$ is connected, and the inclusion is proper.
Hence, $\rho(\Ss_{m,s}^{(r)})<\rho(\Ss_{m+1,s}^{(r)})$.

For the second assertion, choose the distinguished outside pair
$\{u,v\}$ to be the same in the two constructions and take the new
vertex outside $\{u,v\}$. Then $\Ss_{m,s}^{(r),+}$ is a proper
subhypergraph of $\Ss_{m+1,s}^{(r),+}$. The latter is connected since
it contains $\Ss_{m+1,s}^{(r)}$ as a spanning connected subhypergraph.
Therefore, we get $\rho(\Ss_{m,s}^{(r),+})<\rho(\Ss_{m+1,s}^{(r),+})$, as required.
\end{proof}

We now determine the leading asymptotic term.

\begin{theorem}\label{thm:asymptotic}
For fixed $r\ge 3$ and $k\ge 2r+1$, we have 
\begin{equation}\label{eq:asymptotic-main}
\spex_r(n,k)
=
\left(L_{r,s}+o(1)\right)n^{q/r}.
\end{equation}
\end{theorem}

\begin{proof}
The lower bound follows from Lemmas~\ref{lem:candidates-legal} and
\ref{lem:candidate-lower-bound}. We prove the upper bound.

Let $\Hh$ be an $n$-vertex Berge-$C_{\ge k}$-free $r$-graph, and let
$x_1\ge x_2\ge\cdots\ge x_n\ge 0$ with $\sum_{i=1}^n x_i^r=1$ be a normalized vector attaining $\rho(\Hh)$. Fix an integer $M\ge s$
and put $X=[M]$ and $Y=[n]\setminus X$. Partition $E(\Hh)$ into $E_0,E_1,E_2$ according as an edge contains
zero, exactly one, or at least two vertices of $Y$. Then we have
\begin{equation}\label{eq:rayleigh-split}
\frac{\rho(\Hh)}{r}
=
\sum_{e\in E_0}x_e
+
\sum_{e\in E_1}x_e
+
\sum_{e\in E_2}x_e.
\end{equation}

Since $E_0\subseteq\binom{X}{r}$, we immediately get 
\begin{equation}\label{eq:E0-bound}
\sum_{e\in E_0}x_e=O_M(1).
\end{equation}

To estimate $E_2$, let $\phi$ be the map from
Lemma~\ref{lem:orientation}. At most $\kappa M$ edges of $E_2$ are
assigned by $\phi$ to vertices of $X$, so their total weight is
$O_M(1)$. If $e\in E_2$ is assigned to $y\in Y$, choose
$z\in(e\cap Y)\setminus\{y\}$. Since
\[
Mx_{M+1}^r\le \sum_{i=1}^M x_i^r\le 1,
\]
we have $x_z\le x_{M+1}\le M^{-1/r}$, and hence $x_e\le M^{-1/r}x_y$. Using the load bound in Lemma~\ref{lem:orientation}, we obtain
\begin{equation}\label{eq:E2-bound}
\sum_{e\in E_2}x_e
\le
\kappa M^{-1/r}\sum_{y\in Y}x_y+O_M(1).
\end{equation}

It remains to estimate $E_1$. For $y\in Y$, put
$\Ll_y:=L_X(y)\subseteq\binom{X}{q}$.
Then we have
\begin{equation}\label{eq:E1-link-sum}
\sum_{e\in E_1}x_e
=
\sum_{y\in Y}x_y\sum_{F\in\Ll_y}x_F.
\end{equation}
Let $Y_{\mathrm{large}}
:=
\{y\in Y:|\supp(\Ll_y)|\ge s+1\}$. For each fixed $q$-graph $\Ll\subseteq\binom{X}{q}$ with support of
size at least $s+1$, Corollary~\ref{cor:link-type} shows that fewer
than $|\supp(\Ll)|\le M$ vertices $y$ satisfy $\Ll_y=\Ll$. Since
there are at most $2^{\binom{M}{q}}$ possible link hypergraphs, we obtain that 
\[
|Y_{\mathrm{large}}|
\le
M2^{\binom{M}{q}}
=
O_M(1).
\]
Their total contribution to \eqref{eq:E1-link-sum} is therefore
$O_M(1)$.

For $y\in Y\setminus Y_{\mathrm{large}}$, the support of $\Ll_y$ has
size at most $s$. Since the coordinates are nonincreasing, we get
\[
\sum_{F\in\Ll_y}x_F
\le
e_q(x_1,\ldots,x_s).
\]
Consequently, from \eqref{eq:E1-link-sum} we obtain
\begin{equation}\label{eq:E1-bound}
\sum_{e\in E_1}x_e
\le
e_q(x_1,\ldots,x_s)\sum_{y\in Y}x_y+O_M(1).
\end{equation}

Put $T:=\sum_{i=1}^s x_i^r$. By Lemma~\ref{lem:elementary-symmetric}, we get
\[
e_q(x_1,\ldots,x_s)
\le
\binom{s}{q}\left(\frac{T}{s}\right)^{q/r}.
\]
Moreover, H\"older's inequality gives $\sum_{y\in Y}x_y\le n^{q/r}(1-T)^{1/r}$.
Combining \eqref{eq:rayleigh-split}, \eqref{eq:E0-bound},
\eqref{eq:E2-bound}, and \eqref{eq:E1-bound}, we obtain, for fixed
$M$, that
\begin{align}
\frac{\rho(\Hh)}{n^{q/r}}
&\le
r\binom{s}{q}s^{-q/r}T^{q/r}(1-T)^{1/r}
+r\kappa M^{-1/r}
+o_n(1).
\label{eq:asymptotic-upper-fixed-M}
\end{align}
The function $f(T):=T^{q/r}(1-T)^{1/r}$ has a unique maximum on $[0,1]$ at $T=q/r$, and
$\max_{0\le T\le 1}f(T)=\frac{q^{q/r}}{r}$.
Thus \eqref{eq:asymptotic-upper-fixed-M} gives
\[
\limsup_{n\to\infty}
\frac{\spex_r(n,k)}{n^{q/r}}
\le
L_{r,s}+r\kappa M^{-1/r},
\]
for arbitrary $M\ge s$. Hence, we get
\[\limsup_{n\to\infty}
\frac{\spex_r(n,k)}{n^{q/r}}\le \inf_{M\ge s}L_{r,s}+r\kappa M^{-1/r}=L_{r,s},\]
which proves the required upper bound.
\end{proof}

We next establish a quantitative stability theorem. 

\begin{theorem}\label{thm:stability}
For every $\varepsilon>0$, there exist
$\delta=\delta(r,k,\varepsilon)>0$ and
$n_0=n_0(r,k,\varepsilon)$ with the following property. Let $n\ge n_0$,
let $\Hh$ be an $n$-vertex Berge-$C_{\ge k}$-free $r$-graph, and let
$x_1\ge\cdots\ge x_n\ge 0$ with $\sum_{i=1}^n x_i^r=1$ attain $\rho(\Hh)$. If $\rho(\Hh)\ge (L_{r,s}-\delta)n^{q/r}$,
then, with $S=[s]$, the following statements hold:
\begin{itemize}
\item[(i)] $x_{s+1}<\varepsilon$;
\item[(ii)] $\left|x_i^r-\frac{q}{rs}\right|<\varepsilon$ for $1\le i\le s$;
\item[(iii)] $\left|\sum_{i>s}x_i^r-\frac1r\right|<\varepsilon$
and $\left|
\frac{\sum_{i>s}x_i}{n^{q/r}}-r^{-1/r}
\right|<\varepsilon$;
\item[(iv)] fewer than $\varepsilon n$ vertices
$y\in V(\Hh)\setminus S$ satisfy $L_S(y)\ne\binom{S}{q}$.
\end{itemize}
\end{theorem}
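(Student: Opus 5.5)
We will rerun the proof of Theorem~\ref{thm:asymptotic}, but now keep every slack term explicit, so that near-equality in the final bound forces near-equality in each inequality used along the way. Fix a large integer $M\ge s$, to be chosen depending on $\varepsilon$, and set $X=[M]$ and $Y=[n]\setminus X$. The argument there gives the refined inequality
\[
\frac{\rho(\Hh)}{r}\le \sum_{y\in Y\setminus Y_{\mathrm{large}}}x_y\sum_{F\in\Ll_y}x_F+\kappa M^{-1/r}\sum_{y\in Y}x_y+O_M(1).
\]
For $y\notin Y_{\mathrm{large}}$, the link $\Ll_y$ has support of size at most $s$. If that support is not exactly $[s]$, or if $\Ll_y\ne\binom{[s]}{q}$, then the inner sum is at most $e_q(x_1,\dots,x_s)-\eta$, where $\eta$ depends on how large the coordinates $x_1,\dots,x_s$ are. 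Write $W:=\sum_{y\in Y}x_y\le n^{q/r}(1-T)^{1/r}$. Dividing by $n^{q/r}$ and comparing with the hypothesis $\rho\ge(L_{r,s}-\delta)n^{q/r}$, we get that the following quantities are all $O(\delta+M^{-1/r}+o(1))$:
\begin{itemize}
\item the gap $\max f-f(T)$;
\item the gap in Lemma~\ref{lem:elementary-symmetric};
\item the gap in H\"older's inequality for $W$, namely $n^{q/r}(1-T)^{1/r}-W$;
\item the weighted mass of the vertices $y\in Y$ whose link is deficient.
\end{itemize}

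Statements (ii) and (iii) then follow by compactness. The function $f(T)=T^{q/r}(1-T)^{1/r}$ has a unique maximizer $T=q/r$, so $T$ is close to $q/r$. The equality case in Lemma~\ref{lem:elementary-symmetric} is a continuous function on a compact set, so a near-equality there forces all $x_i^r$, $i\le s$, to be close to $T/s\approx q/(rs)$. This gives (ii), and then $\sum_{i>s}x_i^r=1-T\approx 1/r$. Near-equality in H\"older for $W$ gives $W\approx r^{-1/r}n^{q/r}$. The contribution of $\sum_{s<i\le M}x_i\le M$ is negligible after dividing by $n^{q/r}$, which gives the second half of (iii).

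For (i), suppose instead that $x_{s+1}\ge\varepsilon$. Then $T+x_{s+1}^r$ is bounded by $1$, and we distinguish two cases. In the first case the top $s+1$ coordinates carry mass. Here we adjust the estimate: by (ii), every support of size at most $s$ omits at least one index $i\le s+1$. Instead of bounding by $e_q(x_1,\dots,x_s)$, we then bound $\sum_{F\in\Ll_y}x_F$ by $\max_{|A|=s,A\subseteq[s+1]}e_q(x_A)$. Now $T'=\sum_{i\le s+1}x_i^r$ satisfies $T'\ge T+\varepsilon^r$, and the mass outside is only $1-T'$. A cleaner route is to apply the whole argument with $s$ replaced by the top $s+1$ coordinates, using the bound $e_q(x_A)\le\binom{s}{q}((T'-x_{\min}^r)/s)^{q/r}$ together with outside mass $1-T'$. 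One checks that this is strictly below the optimum by an amount depending on $\varepsilon$, since $T\le T'-\varepsilon^r$ while the outside mass is at most $1-T'$. So the optimization $\max_T T^{q/r}(1-T-\varepsilon^r)^{1/r}<\max f$ yields a contradiction for small $\delta$. This is the step I consider most delicate to state cleanly, but it is a finite-dimensional strict-inequality argument.

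For (iv), the key is a quantitative deficiency bound. If $\Ll_y\ne\binom{[s]}{q}$ has support of size at most $s$, then either $\Ll_y$ misses some $F\subseteq[s]$, losing $x_F\ge c(r,s)>0$ by (ii), or $\Ll_y$ uses some index $j>s$, losing at least $x_F-x_{F'}$ with $x_{F'}$ containing $x_j\le\varepsilon'$ by (i). In either case the inner sum is at most $e_q(x_1,\dots,x_s)-c$. Hence the deficient vertices $D$ satisfy $c\sum_{y\in D}x_y=O((\delta+M^{-1/r}+o(1))n^{q/r})$. The main obstacle is to turn this weighted bound into a bound on the cardinality $|D|$, because the weights $x_y$ of deficient vertices might be tiny. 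To handle this, I would use the eigen-equation \eqref{eq:eigen-equation} and the near-equality in H\"older, which says that the $x_y$ for $y\in Y$ are nearly constant in an $L^r$ sense. Near-equality in H\"older ($\sum x_y\approx n^{q/r}(\sum x_y^r)^{1/r}$) forces all but $\varepsilon n/2$ vertices to satisfy $x_y\ge\tfrac12(rn)^{-1/r}$, by a standard stability version of the power-mean inequality. Among these vertices, the weighted bound gives fewer than $\varepsilon n/2$ deficient ones, provided $\delta$ and $M^{-1}$ are small. The at most $M-s$ deficient vertices in $X\setminus S$ are absorbed for large $n$.
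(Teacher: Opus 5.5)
Your proposal is correct and is essentially the paper's proof. It uses the same $X=[M]$ decomposition from Theorem~\ref{thm:asymptotic}, near-equality in $f$ and in Lemma~\ref{lem:elementary-symmetric} for (ii)--(iii), and a weighted link-deficit bound for (iv). For (i), your ``cleaner route'' is exactly the paper's argument: with $M\ge s+1$ the outside $r$-mass is at most $1-T-x_{s+1}^r$, so homogeneity caps the bound at $L_{r,s}(1-x_{s+1}^r)$. In fact this already follows from your third bullet.

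The differences from the paper are minor:
\begin{itemize}
\item The paper argues by contradiction along a sequence rather than tracking constants explicitly.
\item Once (i) is known, the paper switches to $X=S$, so every edge with one outside vertex has its core part in $\binom{S}{q}$. This avoids your stray-index estimate, which should read $\sum_{F\in\Ll_y}x_F\le e_q(x_1,\ldots,x_s)-c+\binom{s}{q}x_{s+1}$.
\item The paper turns the weighted deficit into a count by applying H\"older directly to the complement of the bad set. You instead use a pointwise lower bound on most $x_y$, which does hold by Bernoulli's inequality since $q/r>1/2$.
\end{itemize}
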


\begin{proof}
Suppose the theorem is false. Then there is an $\varepsilon_0>0$ for
which no such pair $(\delta,n_0)$ exists. For each positive integer
$j$, choose a Berge-$C_{\ge k}$-free $r$-graph $\Hh_j$ on
$n_j\ge j$ vertices, together with a normalized maximizing vector
$x^{(j)}$, such that $\rho(\Hh_j)\ge (L_{r,s}-j^{-1})n_j^{q/r}$,
but at least one of conclusions (i)--(iv), with $\varepsilon_0$ in
place of $\varepsilon$, fails. After passing to a subsequence and
relabeling $n_j$ as $n$, Theorem~\ref{thm:asymptotic} gives
\begin{equation}\label{eq:stability-near-extremal}
\frac{\rho(\Hh_n)}{n^{q/r}}\longrightarrow L_{r,s}.
\end{equation}

For readability, suppress the superscript $(n)$ and write $T_n:=\sum_{i=1}^s x_i^r$ and $\eta_n:=x_{s+1}$. We first prove that
\begin{equation}\label{eq:eta-to-zero}
\eta_n\longrightarrow0.
\end{equation}
Suppose otherwise. After passing to a subsequence, there exists a
constant \(\eta>0\) such that $\eta_n=x_{s+1}\ge\eta$ for every \(n\) in the subsequence. Fix an integer \(M\ge s+1\), and put $X=[M]$ and  $Y=[n]\setminus[M]$.
Since \(x_{s+1}\ge\eta\), the total \(r\)-mass on \(Y\) satisfies $\sum_{y\in Y}x_y^r\le 1-T_n-\eta^r$. Repeating the edge decomposition used in the proof of
Theorem~\ref{thm:asymptotic}, with \(M\) fixed, gives
\[
\frac{\rho(H_n)}{n^{q/r}}
\le
r\binom{s}{q}s^{-q/r}
T_n^{q/r}
\bigl(1-\eta^r-T_n\bigr)^{1/r}
+
r\kappa M^{-1/r}
+
o(1),
\]
where \(n\to\infty\) while \(M\) is fixed.

For \(A:=1-\eta^r\), we have $\max_{0\le T\le A}T^{q/r}(A-T)^{1/r}=A\frac{q^{q/r}}{r}$. Consequently, we get
\[ 
\limsup_{n\to\infty}
\frac{\rho(H_n)}{n^{q/r}}
\le
L_{r,s}(1-\eta^r)+r\kappa M^{-1/r}.
\]
Choose \(M\) sufficiently large such that $r\kappa M^{-1/r}<\frac12L_{r,s}\eta^r$. Then we get
\[
\limsup_{n\to\infty}
\frac{\rho(H_n)}{n^{q/r}}
\le
L_{r,s}\left(1-\frac{\eta^r}{2}\right)
<
L_{r,s},
\]
contradicting \eqref{eq:stability-near-extremal}. This proves
\eqref{eq:eta-to-zero}.

We now take $X=S$ and $Y=V(\Hh_n)\setminus S$. Split the edges
according to the number of vertices they contain in $Y$. The total
weight of edges contained in $S$ is $O_{r,s}(1)$. For edges containing
at least two vertices of $Y$, use the assignment from
Lemma~\ref{lem:orientation}. At most $\kappa s$ such edges are assigned
to $S$. Every remaining edge is assigned to some $y\in Y$ and contains
another vertex of $Y$, whose coordinate is at most $\eta_n$. Hence, we get
\begin{equation}\label{eq:stability-E2}
\sum_{\substack{e\in E(\Hh_n)\\ |e\cap Y|\ge 2}}x_e
\le
\kappa\eta_n\sum_{y\in Y}x_y+O_{r,s}(1)
=
o(n^{q/r}).
\end{equation}
For an edge containing exactly one vertex $y\in Y$, its core part
belongs to $\binom{S}{q}$. Therefore, we get
\begin{equation}\label{eq:stability-master}
\frac{\rho(\Hh_n)}{r}
\le
e_q(x_1,\ldots,x_s)\sum_{i>s}x_i+o(n^{q/r}).
\end{equation}
Using Lemma~\ref{lem:elementary-symmetric} and H\"older's inequality, we obtain
\begin{equation}\label{eq:stability-one-variable}
\frac{\rho(\Hh_n)}{n^{q/r}}
\le
r\binom{s}{q}s^{-q/r}
T_n^{q/r}(1-T_n)^{1/r}+o(1).
\end{equation}
The right-hand side has a unique maximum $L_{r,s}$, attained at
$T_n=q/r$. In view of \eqref{eq:stability-near-extremal}, we get
\begin{equation}\label{eq:T-limit}
T_n\longrightarrow \frac{q}{r}.
\end{equation}

We next prove that the core coordinates are asymptotically equal. If
not, pass to a subsequence on which
$(x_1,\ldots,x_s)$ converges to a vector
$(a_1,\ldots,a_s)$ that is not constant. By \eqref{eq:T-limit}, we get 
$\sum_i a_i^r=q/r$. Lemma~\ref{lem:elementary-symmetric} is then strict
at $(a_1,\ldots,a_s)$. Together with \eqref{eq:stability-master} and
H\"older's inequality, this would make the limit superior in
\eqref{eq:stability-near-extremal} strictly smaller than $L_{r,s}$, a
contradiction. Hence, we conclude that
\begin{equation}\label{eq:core-coordinate-limit}
x_i^r=\frac{q}{rs}+o(1) \text{ for }1\le i\le s.
\end{equation}
This proves the asymptotic form of conclusion (ii), while
\eqref{eq:T-limit} gives
\begin{equation}\label{eq:tail-r-mass}
\sum_{i>s}x_i^r=\frac1r+o(1).
\end{equation}

Let $A_n:=e_q(x_1,\ldots,x_s)$ and $R_n:=\sum_{i>s}x_i$. By \eqref{eq:core-coordinate-limit}, we get
\[
A_n
=
\binom{s}{q}\left(\frac{q}{rs}\right)^{q/r}+o(1).
\]
The left-hand side of \eqref{eq:stability-master} is asymptotic to
$(L_{r,s}/r)n^{q/r}$, so $R_n\ge \left(r^{-1/r}-o(1)\right)n^{q/r}$. On the other hand, H\"older's inequality and
\eqref{eq:tail-r-mass} give the matching upper bound. Thus
\begin{equation}\label{eq:tail-l1-mass}
R_n
=
\left(r^{-1/r}+o(1)\right)n^{q/r}.
\end{equation}

It remains to prove the link conclusion. For $y\notin S$, put $W_y:=\sum_{F\in L_S(y)}x_F$. Since $W_y\le A_n$, the contribution of edges with exactly one outside
vertex is $\sum_{y\notin S}x_y W_y$. By
\eqref{eq:stability-E2}, \eqref{eq:stability-near-extremal},
\eqref{eq:core-coordinate-limit}, and \eqref{eq:tail-l1-mass}, we have
\begin{equation}\label{eq:link-deficit}
\sum_{y\notin S}x_y(A_n-W_y)=o(n^{q/r}).
\end{equation}
Let $B_n:=\left\{
y\notin S:L_S(y)\ne\binom{S}{q}
\right\}$.
By \eqref{eq:core-coordinate-limit}, there exists a constant
$c=c(r,s)>0$ such that $\min_{F\in\binom{S}{q}}x_F\ge c$ for all sufficiently large $n$. Every $y\in B_n$ misses at least one core $q$-set, and hence
\eqref{eq:link-deficit} yields
\begin{equation}\label{eq:bad-l1-small}
\sum_{y\in B_n}x_y=o(n^{q/r}).
\end{equation}
If $|B_n|\ge \varepsilon_0n$ along a subsequence, then the complement
of $B_n$ outside $S$ has at most $(1-\varepsilon_0+o(1))n$ vertices.
By H\"older's inequality and \eqref{eq:tail-r-mass}, we get
\[
\sum_{\substack{y\notin S\\ y\notin B_n}}x_y
\le
(1-\varepsilon_0+o(1))^{q/r}r^{-1/r}n^{q/r}.
\]
Together with \eqref{eq:bad-l1-small}, this contradicts
\eqref{eq:tail-l1-mass}. Therefore $|B_n|=o(n)$.

We have shown that every sequence satisfying
\eqref{eq:stability-near-extremal} satisfies the asymptotic forms of
(i)--(iv), which contradicts the choice of the sequence. The theorem
follows.
\end{proof}

\section{Proof of the exact theorem}

We now upgrade the stability conclusion to the exact extremal
structure. We first isolate the cycle constructions needed in both
parity cases.

Let \(S\) be an \(s\)-set and put
\[
G:=
\left\{
g\in V(H)\setminus S:
L_S(g)=\binom{S}{q}
\right\}.
\]
We call the vertices of \(G\) \emph{good}. We shall repeatedly use the following realization observation. Let
\(g\in G\), and let \(a,b\in S\) be prescribed. Since
\(s\ge r=q+1\), there exist distinct sets $F,F'\in\binom{S}{q}$ such that \(a\in F\) and \(b\in F'\). Hence the pairs \(ga\) and
\(gb\) can be represented by the distinct hyperedges $\{g\}\cup F$ and $\{g\}\cup F'$. Every such core-link edge contains exactly one vertex outside \(S\).
Consequently, core-link edges associated with different good vertices
are automatically distinct. Moreover, every hyperedge used below to
represent a pair of vertices outside \(S\) contains at least two
vertices outside \(S\), and therefore cannot coincide with a
core-link edge. Thus, whenever the exceptional hyperedges explicitly
chosen below are distinct, all hyperedges representing the resulting
Berge cycle are pairwise distinct.

\begin{lemma}\label{lem:cycle-forcing}
Let \(H\) be Berge-\(C_{\ge k}\)-free, let \(S\) be an \(s\)-set,
and assume that $|G|\ge s+2r$. Then the following statements hold.

\begin{itemize}
\item[\rm (i)]
If \(k=2s+1\), then every edge of \(H\) contains at most one
vertex of \(G\).

\item[\rm (ii)]
Put $B:=V(H)\setminus(S\cup G)$, and form a bipartite graph \(\Gamma\) with parts \(B\) and \(G\),
where \(bg\in E(\Gamma)\) if there exists an edge \(e\in E(H)\)
such that $\{b,g\}\subseteq e$ and $|e\setminus S|\ge2$.
Then, for every \(b\in B\), we have
\[
d_\Gamma(b)\le
\begin{cases}
1,   & k=2s+1,\\
r-1, & k=2s+2.
\end{cases}
\]

\item[\rm (iii)]
Suppose that \(k=2s+2\), and define
\[
D_G:=
\left\{
e\in E(H):
|e\setminus S|\ge2,\quad e\setminus S\subseteq G
\right\}.
\]
If \(|D_G|\ge2\), then there exists a fixed pair
\(P_0\in\binom{G}{2}\) such that $e\setminus S=P_0$ for every \(e\in D_G\).
\end{itemize}
\end{lemma}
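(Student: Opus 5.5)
The plan is to build explicit long Berge cycles that alternate between vertices of $S$ and good vertices, and to insert one or two "exceptional" hyperedges carrying pairs outside $S$. By the realization observation above, every good vertex $g$ can be joined to any two prescribed core vertices $a,a'\in S$ by distinct core-link edges. These are automatically distinct from the core-link edges of other good vertices and from any hyperedge with at least two vertices outside $S$. So it suffices to specify the base sequence and check that the explicitly chosen exceptional edges are distinct.

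The basic template is as follows. Write $S=\{a_1,\dots,a_s\}$ and choose auxiliary good vertices $h_1,h_2,\ldots$, distinct from all other base vertices. Since $|G|\ge s+2r$ and we use at most $s+O(1)\le s+4$ good vertices in total, such choices are always available. The segment $a_1h_1a_2h_2\cdots h_{s-1}a_s$ is a Berge path through all of $S$ using $s-1$ good vertices.

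For (i), suppose an edge $e$ contains two good vertices $g,g'$. I would close the segment through $g,g'$ via the cyclic sequence $g,g',a_1,h_1,a_2,\ldots,h_{s-1},a_s$. Here $gg'$ is represented by $e$ and all other pairs by core-link edges. This has length $2+s+(s-1)=2s+1=k$, a contradiction.

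For (ii) with $k=2s+1$, suppose $b\in B$ has two $\Gamma$-neighbours $g,g'$ via edges $e\ni b,g$ and $e'\ni b,g'$. If some single edge contains $b,g,g'$, then (i) is already contradicted. Otherwise we may take $e\ne e'$, and the cycle $b,g,a_1,h_1,\ldots,h_{s-1},a_s,g'$ has length $2s+2\ge k$. For $k=2s+2$, suppose $d_\Gamma(b)\ge r$ with neighbours $g_1,\dots,g_r$. Fix an edge $e$ realizing $bg_1$. Since $e\setminus\{b\}$ has only $r-1$ vertices, and one of them is $g_1$ rather than another neighbour, some $g_j$ ($j\ne 1$) lies outside $e$ or is realized by another edge. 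More carefully, $e$ contains at most $r-1$ of the $g_i$, so some $g_j\notin e$, and its realizing edge $e_j$ differs from $e$. The same cycle of length $2s+2$ then gives the contradiction.

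For (iii), the key step is a case analysis on two distinct edges $e,f\in D_G$, aiming to show that their outside parts are the same $2$-set. First, if $e\setminus S\supseteq\{g_1,g_2\}$ and $f\setminus S\supseteq\{g_3,g_4\}$ with $\{g_1,g_2\}\cap\{g_3,g_4\}=\varnothing$, use the sequence $g_1,g_2,a_1,g_3,g_4,a_2,h_1,a_3,\ldots,h_{s-2},a_s$. This has length $4+s+(s-2)=2s+2$. Second, if the pairs share exactly one vertex, say $\{g_1,g_2\}\subseteq e$ and $\{g_2,g_3\}\subseteq f$, use $g_1,g_2,g_3,a_1,h_1,\ldots,h_{s-1},a_s$. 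This has length $3+s+(s-1)=2s+2$. Either way there is a contradiction, so any two edges of $D_G$ can only share their outside pair exactly. If some $e\in D_G$ had $|e\setminus S|\ge3$, pick any other $f\in D_G$ and a pair $\{c,d\}\subseteq f\setminus S$. Since $e\setminus S$ has at least three elements, it contains a pair different from $\{c,d\}$, and one of the two cases above applies. Hence every member of $D_G$ has exactly two outside vertices, and these pairs all coincide, giving $P_0$.

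The main obstacle I anticipate is bookkeeping rather than ideas: ensuring in each case that the two exceptional edges are genuinely distinct, which is why $e\ne f$ and the degree threshold $r$ matter, and that enough fresh good vertices remain for the $h_i$.
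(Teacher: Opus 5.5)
Your proposal is correct and follows the paper's proof essentially step for step. You use the same alternating core/good-vertex base sequences of lengths $2s+1$ and $2s+2$, the same appeal to (i) and the observation that an $r$-edge through $b$ holds at most $r-1$ of its $\Gamma$-neighbours, and the same shared-vertex/disjoint case split in (iii). The only differences are cosmetic: where you insert the second outside pair in the disjoint case, and phrasing the end of (iii) as directly ruling out $|e\setminus S|\ge 3$.
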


\begin{proof}
For (i), suppose that an edge \(e\) contains distinct vertices
\(u,v\in G\). Write $S=\{x_1,\ldots,x_s\}$,
and choose distinct vertices $g_1,\ldots,g_{s-1}\in G\setminus e$. Consider the cyclic base sequence
\[
u,v,x_1,g_1,x_2,g_2,\ldots,
x_{s-1},g_{s-1},x_s.
\]
Represent the pair \(uv\) by \(e\), and represent every remaining
consecutive pair through complete core links. By the realization
observation preceding the lemma, all representing hyperedges can be
chosen pairwise distinct. We obtain a Berge cycle of length \(2s+1\),
a contradiction.

For (ii), first suppose that \(k=2s+1\). If some \(b\in B\) has two
distinct good neighbors \(g,h\), choose hyperedges \(e,f\in E(H)\)
such that $\{b,g\}\subseteq e$ and $\{b,h\}\subseteq f$. By part (i), we must have \(e\ne f\). Choose distinct vertices
$g_1,\ldots,g_{s-1}\in G\setminus\{g,h\}$. The cyclic base sequence
\[
b,g,x_1,g_1,x_2,g_2,\ldots,
x_{s-1},g_{s-1},x_s,h
\]
can be represented by \(e\) on \(bg\), by \(f\) on \(hb\), and by
complete core links on all other consecutive pairs. All representing
hyperedges are pairwise distinct, so this is a Berge cycle of length
\(2s+2\ge k\), a contradiction. Hence \(d_\Gamma(b)\le1\).

Now suppose that \(k=2s+2\). If \(d_\Gamma(b)\ge r\), then the \(r\)
distinct good neighbors of \(b\) cannot all lie with \(b\) in one
\(r\)-edge. Consequently, there exist distinct hyperedges \(e,f\)
and distinct good vertices \(g,h\) such that $\{b,g\}\subseteq e$ and $\{b,h\}\subseteq f$. Using the same cyclic base sequence as above gives a Berge cycle of
length \(2s+2=k\), again a contradiction. Thus
\(d_\Gamma(b)\le r-1\).

For (iii), take distinct edges \(e,f\in D_G\). We claim that every
pair $P\in\binom{e\setminus S}{2}$ is equal to every pair $Q\in\binom{f\setminus S}{2}$. Suppose otherwise. First assume that \(P\) and \(Q\) share one vertex. Write $P=\{a,b\}$ and $Q=\{b,c\}$, where \(a,b,c\) are distinct. Choose distinct vertices $g_1,\ldots,g_{s-1}\in G\setminus\{a,b,c\}$.
Use the cyclic base sequence
\[
a,b,c,x_1,g_1,x_2,g_2,\ldots,
x_{s-1},g_{s-1},x_s.
\]
Represent \(ab\) by \(e\), represent \(bc\) by \(f\), and represent
all other consecutive pairs through complete core links. This gives
a Berge cycle of length \(2s+2\), a contradiction.

Next assume that \(P\) and \(Q\) are disjoint. Write $P=\{a,b\}$ and  $Q=\{c,d\}$. Choose distinct vertices $g_1,\ldots,g_{s-2}
   \in G\setminus\{a,b,c,d\}$. The cyclic base sequence
\[
a,b,x_1,g_1,x_2,g_2,\ldots,
x_{s-2},g_{s-2},x_{s-1},c,d,x_s
\]
is represented by \(e\) on \(ab\), by \(f\) on \(cd\), and by
complete core links on all remaining consecutive pairs. Again this
is a Berge cycle of length \(2s+2\), a contradiction.

It follows that every pair in
\(\binom{e\setminus S}{2}\) equals every pair in
\(\binom{f\setminus S}{2}\). Since both \(e\setminus S\) and
\(f\setminus S\) contain at least two vertices, each of them must
therefore consist of exactly the same two vertices. As \(e\) and
\(f\) were arbitrary distinct members of \(D_G\), there is a fixed
pair \(P_0\in\binom{G}{2}\) such that $e\setminus S=P_0$ for every \(e\in D_G\).
\end{proof}

We can now prove Theorem~\ref{thm:main}.

\begin{proof}[Proof of Theorem~\ref{thm:main}]
Suppose, to the contrary, that the theorem fails for infinitely many
values of $n$. For each such $n$, choose a spectral extremal
$n$-vertex Berge-$C_{\ge k}$-free $r$-graph $\Hh_n$ that is not
isomorphic to the asserted candidate. By Lemma~\ref{lem:components},
$\Hh_n$ has a connected component $\Hh_n^\star$ satisfying $\rho(\Hh_n^\star)=\rho(\Hh_n)$. Let
$m_n:=|V(\Hh_n^\star)|$. The candidate lower bound gives
\[
\rho(\Hh_n^\star)=\rho(\Hh_n)
\ge \left(L_{r,s}-o(1)\right)n^{q/r}.
\]
In particular, $\rho(\Hh_n^\star)\to\infty$, and the trivial bound
$\rho(\Hh_n^\star)\le r\binom{m_n}{r}$ shows that $m_n\to\infty$.
We may therefore apply Theorem~\ref{thm:asymptotic} to the sequence of
components, obtaining
\[
\left(L_{r,s}-o(1)\right)n^{q/r}
\le
\rho(\Hh_n^\star)
\le
\left(L_{r,s}+o(1)\right)m_n^{q/r}.
\]
Hence, we get
\begin{equation}\label{eq:component-size}
\frac{m_n}{n}\longrightarrow 1
\end{equation}
and
\begin{equation}\label{eq:component-near-extremal}
\frac{\rho(\Hh_n^\star)}{m_n^{q/r}}
\longrightarrow L_{r,s}.
\end{equation}

Let $x$ be the positive normalized Perron vector of $\Hh_n^\star$,
with vertices ordered by nonincreasing coordinate, and put $S=[s]$.
Applying Theorem~\ref{thm:stability} along the sequence
\eqref{eq:component-near-extremal}, we obtain
\begin{equation}\label{eq:exact-stability-data}
\eta_n:=\max_{y\notin S}x_y=o(1),
\qquad
|B|=o(m_n),
\qquad
c_n:=\min_{F\in\binom{S}{q}}x_F\ge c_0>0,
\end{equation}
where $G:=\left\{
y\notin S:L_S(y)=\binom{S}{q}
\right\}$ and $B:=V(\Hh_n^\star)\setminus(S\cup G)$. In particular, $|G|=m_n-o(m_n)$, so the hypotheses of
Lemma~\ref{lem:cycle-forcing} hold for all sufficiently large $n$.

Let $\Tt:=\Ss_{m_n,s}^{(r)}$ be the base construction on the same vertex set and with core $S$.
Every edge in
\[
\Dd:=E(\Hh_n^\star)\setminus E(\Tt)
\]
contains at least two vertices outside $S$. Split $\Dd_B:=\{e\in\Dd:e\cap B\ne\varnothing\}$ and $\Dd_G:=\Dd\setminus\Dd_B$. If $k=2s+1$, Lemma~\ref{lem:cycle-forcing}(i) gives
$\Dd_G=\varnothing$.

We first control the total Perron weight of $\Dd_B$. Define
\[
d:=
\begin{cases}
1,&k=2s+1,\\
r-1,&k=2s+2.
\end{cases}
\]
Let $\Dd'\subseteq\Dd_B$ be nonempty, and put $U:=B\cap\bigcup_{e\in\Dd'}e$ and $W:=G\cap\bigcup_{e\in\Dd'}e$. Every vertex of $W$ has a neighbor in $U$ in the bipartite graph of
Lemma~\ref{lem:cycle-forcing}(ii), while every vertex of $U$ has degree
at most $d$. Hence, we get $|W|\le d|U|$. Every edge of $\Dd'$ is contained in $S\cup U\cup W$, so
Lemma~\ref{lem:sparsity} gives
\[
|\Dd'|
\le
\kappa\bigl(s+(d+1)|U|\bigr)
\le
K|U|,\]
where $K:=\kappa(s+d+1)$. Lemma~\ref{lem:bounded-assignment} therefore yields a map
$\psi:\Dd_B\to B$ such that $\psi(e)\in e$ and
$|\psi^{-1}(b)|\le K$ for every $b\in B$. If $\psi(e)=b$, then $e$
contains another vertex outside $S$, whose coordinate is at most
$\eta_n$. Therefore, we get
\begin{equation}\label{eq:bad-edge-weight}
\sum_{e\in\Dd_B}x_e
\le
K\eta_n\sum_{b\in B}x_b.
\end{equation}

Every bad vertex $b\in B$ misses at least one base edge
$\{b\}\cup F_b$ with $F_b\in\binom{S}{q}$. These missing edges are
distinct for different $b$, and hence
\begin{equation}\label{eq:missing-base-weight}
\sum_{e\in E(\Tt)\setminus E(\Hh_n^\star)}x_e
\ge
c_n\sum_{b\in B}x_b.
\end{equation}
We now account for the parity-dependent all-good exceptional edges. If
$k=2s+1$, set $W^+:=0$ and $\Tt^+:=\Tt$.
As noted above, $\Dd_G=\varnothing$. Suppose instead that $k=2s+2$. Choose a pair
$P=\{u,v\}\in\binom{G}{2}$ maximizing $x_u x_v$, and define
\[
\Tt^+
:=
\Tt\cup
\left\{
P\cup R:R\in\binom{S}{r-2}
\right\},
\]
and
\[
W^+
:=
x_u x_v\,e_{r-2}(x_i:i\in S).
\]
We claim that
\begin{equation}\label{eq:good-edge-comparison}
\sum_{e\in\Dd_G}x_e\le W^+.
\end{equation}
If $|\Dd_G|\ge 2$, Lemma~\ref{lem:cycle-forcing}(iii) gives a fixed
pair $P_0\in\binom{G}{2}$ such that every edge of $\Dd_G$ has the form
$P_0\cup R$ with $R\in\binom{S}{r-2}$. Hence, we get
\[
\sum_{e\in\Dd_G}x_e
\le
x_{P_0}e_{r-2}(x_i:i\in S)
\le
W^+.
\]
If $\Dd_G=\{e\}$ and $|e\setminus S|=2$, writing
$Q=e\setminus S$ gives
\[
x_e
<
x_Q e_{r-2}(x_i:i\in S)
\le
W^+,
\]
because $\binom{s}{r-2}>1$ and all core coordinates are positive. If
$\Dd_G=\{e\}$ and $t:=|e\setminus S|\ge 3$, choose
$Q\in\binom{e\setminus S}{2}$. Then we have $x_e\le x_Q\eta_n^{t-2}$,
whereas $W^+\ge x_Q e_{r-2}(x_i:i\in S)$. The elementary symmetric polynomial on the right is bounded away from
zero by \eqref{eq:exact-stability-data}, while $\eta_n=o(1)$, so the
inequality is strict for large $n$. The case $\Dd_G=\varnothing$ is
trivial. Moreover, equality in \eqref{eq:good-edge-comparison} can hold
only when $\Dd_G$ is the complete special family on a fixed pair
$P_0$ with maximal product $x_{P_0}$.

The difference of the two Rayleigh sums is
\begin{align}
\frac{P_{\Tt^+}(x)-P_{\Hh_n^\star}(x)}{r}
&=
\sum_{e\in E(\Tt)\setminus E(\Hh_n^\star)}x_e
+W^+
-
\sum_{e\in\Dd_B}x_e
-
\sum_{e\in\Dd_G}x_e.
\label{eq:unified-rayleigh-difference}
\end{align}
By \eqref{eq:bad-edge-weight}, \eqref{eq:missing-base-weight}, and
\eqref{eq:good-edge-comparison}, if $B\ne\varnothing$, then for all
large $n$, we have
\[
P_{\Tt^+}(x)-P_{\Hh_n^\star}(x)
\ge
r(c_n-K\eta_n)\sum_{b\in B}x_b
>
0.
\]
The hypergraph $\Tt^+$ is admissible by Lemma
\ref{lem:candidates-legal}; after adjoining $n-m_n$ isolated vertices,
this contradicts the extremality of $\Hh_n$. Hence
\begin{equation}\label{eq:B-empty}
B=\varnothing.
\end{equation}

All outside vertices are now good, so every base edge meeting the
outside set is present. If $k=2s+1$, Lemma
\ref{lem:cycle-forcing}(i) gives $\Dd=\varnothing$. Thus
$\Hh_n^\star\subseteq\Tt$. If a core edge were missing, positivity of
the Perron vector $x$ would give $P_{\Tt}(x)>P_{\Hh_n^\star}(x)=\rho(\Hh_n^\star)$,
contradicting extremality. Therefore, $\Hh_n^\star\cong\Ss_{m_n,s}^{(r)}$.

If $k=2s+2$, then \eqref{eq:unified-rayleigh-difference}, with
$B=\varnothing$, can be nonpositive only if no core edge is missing and
equality holds in \eqref{eq:good-edge-comparison}. The equality
characterization above implies $\Hh_n^\star\cong\Ss_{m_n,s}^{(r),+}$.

Finally, if $m_n<n$, Lemma~\ref{lem:candidate-monotonicity} gives $\rho(\Hh_n)=\rho(\Hh_n^\star)<\rho(\Ss_{n,s}^{(r)})$
in the odd case, or $\rho(\Hh_n)=\rho(\Hh_n^\star)<\rho(\Ss_{n,s}^{(r),+})$ in the even case. Both contradict extremality. Hence $m_n=n$, and
$\Hh_n$ is the asserted candidate, contrary to its choice. This proves
Theorem~\ref{thm:main}.
\end{proof}

Corollary~\ref{cor:asymptotic-intro} follows immediately from
Theorems~\ref{thm:main} and \ref{thm:asymptotic}.

\section{Concluding remarks}

The proof uses only the linear edge bound of F\"uredi, Kostochka and
Luo \cite{FKL1}, through the hereditary sparsity estimate in
Lemma~\ref{lem:sparsity}; the exact all-$n$ edge theorem
\cite{FKL2} is not needed. Starting from this sparse structural input,
the proof separates the large and small Perron coordinates, shows that
edges containing several small coordinates are negligible, and reduces
the leading contribution to complete links over a fixed core. The
Perron-vector stability theorem then converts asymptotic optimality into
an almost-complete core-link structure, while the final weighted
comparison makes the description exact.

The result exhibits a concentration phenomenon absent from the
edge-extremal problem. Edge extremality is governed by a tree-like
assembly of complete $r$-uniform blocks of order $k-1$. Spectral
extremality instead places asymptotically all large coordinates on the
fixed $s$-vertex core, with almost every remaining vertex having the
complete $q$-link to that core. In the even case, exactly one additional
outside pair is permitted.

The assumption $k\ge 2r+1$ enters through $s\ge r$. It is used in the
link-realization steps that construct forbidden Berge cycles and in the
comparison that forces the exact outside structure. The adjacent cases
$k=2r-1$ and $k=2r$, where $s=r-1$, require a different analysis: the
candidate becomes a tight sunflower-type construction, and the balance
between core-supported edges and edges containing several outside
vertices changes. Determining the exact spectral extremal hypergraphs in
this remaining range is a natural open problem.

\section*{Acknowledgments}

Lihua Feng is supported by the National Natural Science Foundation of
China (Nos. 12271527 and 12471022). Lu Lu is supported by the National
Natural Science Foundation of China (No. 12371362). Tingzeng Wu is supported by Natural Science Foundation of Qinghai Province (No. 2025-ZJ-902T), and National Natural Science Foundation of China (No. 12261071).


{\footnotesize
\begin{thebibliography}{99}

\bibitem{BaiLu2018}
S.~Bai and L.~Lu,
\newblock A bound on the spectral radius of hypergraphs with $e$ edges,
\newblock \emph{Linear Algebra Appl.} \textbf{549} (2018), 203--218.

\bibitem{BrooksLinzLuo2026}
G.~Brooks, W.~Linz and R.~Luo,
\newblock Spectral radius and Hamiltonicity of uniform hypergraphs,
\newblock \emph{Discrete Math.} \textbf{349} (2026), 115184.

\bibitem{ChangPearsonZhang2008}
K.-C.~Chang, K.~Pearson and T.~Zhang,
\newblock Perron--Frobenius theorem for nonnegative tensors,
\newblock \emph{Commun. Math. Sci.} \textbf{6} (2008), no.~2, 507--520.

\bibitem{CooperDutle2012}
J.~Cooper and A.~Dutle,
\newblock Spectra of uniform hypergraphs,
\newblock \emph{Linear Algebra Appl.} \textbf{436} (2012), no.~9,
3268--3292.

\bibitem{ErdosGallai1959}
P.~Erd\H{o}s and T.~Gallai,
\newblock On maximal paths and circuits of graphs,
\newblock \emph{Acta Math. Acad. Sci. Hungar.} \textbf{10} (1959),
337--356.

\bibitem{ErgemlidzeEtAl2020}
B.~Ergemlidze, E.~Gy\H{o}ri, A.~Methuku, N.~Salia, C.~Tompkins and
O.~Zamora,
\newblock Avoiding long Berge cycles: the missing cases $k=r+1$ and
$k=r+2$,
\newblock \emph{Combin. Probab. Comput.} \textbf{29} (2020), no.~3,
423--435.

\bibitem{FangChangXuGaoHou2025}
L.~Fang, A.~Chang, W.~Xu, G.~Gao and Y.~Hou,
\newblock The spectral radius of $3$-graphs without Berge paths of given
length,
\newblock \emph{Discrete Appl. Math.} \textbf{377} (2025), 414--428.

\bibitem{FKL1}
Z.~F\"uredi, A.~Kostochka and R.~Luo,
\newblock Avoiding long Berge cycles,
\newblock \emph{J. Combin. Theory Ser. B} \textbf{137} (2019), 55--64.

\bibitem{FKL2}
Z.~F\"uredi, A.~Kostochka and R.~Luo,
\newblock Avoiding long Berge cycles II, exact bounds for all $n$,
\newblock \emph{J. Combin.} \textbf{12} (2021), no.~2, 247--268.

\bibitem{GaoHou}
J.~Gao and X.~Hou,
\newblock The spectral radius of graphs without long cycles,
\newblock\emph{Linear Algebra Appl.} \textbf{566} (2019), 17--33.

\bibitem{GyoriKatonaLemons2016}
E.~Gy\H{o}ri, G.~Y.~Katona and N.~Lemons,
\newblock Hypergraph extensions of the Erd\H{o}s--Gallai theorem,
\newblock \emph{European J. Combin.} \textbf{58} (2016), 238--246.

\bibitem{KangLiuLuWang2021}
L.~Kang, L.~Liu, L.~Lu and Z.~Wang,
\newblock The extremal $p$-spectral radius of Berge hypergraphs,
\newblock \emph{Linear Algebra Appl.} \textbf{610} (2021), 608--624.

\bibitem{KangNikiforovYuan2015}
L.~Kang, V.~Nikiforov and X.~Yuan,
\newblock The $p$-spectral radius of $k$-partite and $k$-chromatic uniform
hypergraphs,
\newblock \emph{Linear Algebra Appl.} \textbf{478} (2015), 81--107.

\bibitem{KeevashLenzMubayi2014}
P.~Keevash, J.~Lenz and D.~Mubayi,
\newblock Spectral extremal problems for hypergraphs,
\newblock \emph{SIAM J. Discrete Math.} \textbf{28} (2014), no.~4,
1838--1854.

\bibitem{Lim2005}
L.~Lim,
\newblock Singular values and eigenvalues of tensors: a variational
approach,
\newblock in \emph{Proc. 1st IEEE Int. Workshop on Computational Advances
in Multi-Sensor Adaptive Processing (CAMSAP 2005)}, 2005, pp.~129--132.

\bibitem{LiuLu2022}
L.~Liu and L.~Lu,
\newblock The $\alpha$-normal labelling method for computing the
$p$-spectral radii of uniform hypergraphs,
\newblock \emph{Linear Multilinear Algebra} \textbf{70} (2022), no.~9,
1648--1672.

\bibitem{LuMan2016}
L.~Lu and S.~Man,
\newblock Connected hypergraphs with small spectral radius,
\newblock \emph{Linear Algebra Appl.} \textbf{509} (2016), 206--227.

\bibitem{NiLiuKang2024}
Z.~Ni, L.~Liu and L.~Kang,
\newblock Spectral Tur\'an-type problems on cancellative hypergraphs,
\newblock \emph{Electron. J. Combin.} \textbf{31} (2024), no.~2, Paper
P2.32.

\bibitem{Nikiforov2014}
V.~Nikiforov,
\newblock Analytic methods for uniform hypergraphs,
\newblock \emph{Linear Algebra Appl.} \textbf{457} (2014), 455--535.

\bibitem{Qi2005}
L.~Qi,
\newblock Eigenvalues of a real supersymmetric tensor,
\newblock \emph{J. Symbolic Comput.} \textbf{40} (2005), no.~6,
1302--1324.

\bibitem{SheFanKang2025}
C.-M.~She, Y.-Z.~Fan and L.~Kang,
\newblock Spectral bipartite Tur\'an problems on linear hypergraphs,
\newblock \emph{Discrete Math.} \textbf{348} (2025), no.~6, 114435.

\bibitem{WangHuangShi2026}
W.-H.~Wang, Z.-Q.~Huang and H.-L.~Shi,
\newblock The spectral radius of the hypergraphs without the expansion of
$P_3$ or Berge-$P_k$,
\newblock \emph{Discrete Appl. Math.} \textbf{380} (2026), 588--598.

\bibitem{WangSunBu}
Y.~Wang, L.~Sun and C.~Bu, 
\newblock The high order spectral radius of graphs without long cycles or paths,
\newblock \emph{arXiv: 2510.04461}.

\bibitem{ZhengLiFan2026}
J.~Zheng, H.~Li and Y.-Z.~Fan,
\newblock Spectral Tur\'an problems for nondegenerate hypergraphs,
\newblock \emph{Electron. J. Combin.} \textbf{33} (2026), no.~1, Paper
P1.42.

\bibitem{ZhouKangLiuShan2020}
Y.~Zhou, L.~Kang, L.~Liu and E.~Shan,
\newblock Extremal problems for the $p$-spectral radius of Berge
hypergraphs,
\newblock \emph{Linear Algebra Appl.} \textbf{600} (2020), 22--39.

\end{thebibliography}
}
\end{document}